\newcommand{\mylabel}[2]{#2\def\@currentlabel{#2}\label{#1}}
\newcounter{mnotecount}[section]
\newcommand{\rmnote}[1]{}
\theoremstyle{plain}
\newtheorem*{theorem*}{Theorem}
\newtheorem{theorem}{Theorem}[section]
\newtheorem*{lemma*}{Lemma}
\newtheorem{lemma}[theorem]{Lemma}
\newtheorem*{proposition*}{Proposition}
\newtheorem*{corollary*}{Corollary}
\newtheorem*{claim*}{Claim}
\newtheorem*{conjecture*}{Conjecture}
\newtheorem*{question*}{Question}
\theoremstyle{definition}
\newtheorem{remark}[theorem]{Remark}
\newtheorem*{remark*}{Remark}
\newtheorem*{definition*}{Definition}
\newtheorem*{example*}{Example}
\def\A{\;\forall}
\def\E{\;\exists}
\def\p{\partial}
\def\o{\circ}
\def\preceq{\preccurlyeq}
\def\Im{\on{Im}}
\let\on=\operatorname
\newcommand{\ol}{\overline}
\newcommand{\ul}{\underline}
\def\db{\ol \partial}
\def\al{\alpha}
\def\be{\beta}
\def\ga{\gamma}
\def\de{\delta}
\def\ep{\varepsilon}
\def\ve{\varepsilon}
\def\ze{\zeta}
\def\la{\lambda}
\def\rh{\rho}
\def\si{\sigma}
\def\ph{\varphi}
\def\vh{\varphi}
\def\om{\omega}
\def\Ga{\Gamma}
\def\La{\Lambda}
\def\Om{\Omega}
\def\C{\mathbb{C}}
\def\N{\mathbb{N}}
\def\Q{\mathbb{Q}}
\def\R{\mathbb{R}}
\def\cB{\mathcal{B}}
\def\cE{\mathcal{E}}
\def\cH{\mathcal{H}}
\def\cL{\mathcal{L}}
\def\cP{\mathcal{P}}
\def\cS{\mathcal{S}}
\def\fM{\mathfrak{M}}
\def\fS{\mathfrak{S}}
\def\sD{\mathscr{D}}
\title{Nonlinear conditions for ultradifferentiability}
\author[D.N.~Nenning, A.~Rainer, and G.~Schindl]{David Nicolas Nenning, Armin Rainer, and Gerhard Schindl}
\address{Fakult\"at f\"ur Mathematik, Universit\"at Wien, Oskar-Morgenstern-Platz~1, A-1090 Wien, Austria.}
\email{david.nicolas.nenning@univie.ac.at}
\email{armin.rainer@univie.ac.at}
\email{gerhard.schindl@univie.ac.at}
\begin{document}

	\begin{abstract}
		A remarkable theorem of Joris states that a function $f$ is $C^\infty$ if
		two relatively prime powers of $f$ are $C^\infty$.
		Recently, Thilliez showed that an analogous theorem holds in Denjoy--Carleman classes
		of Roumieu type.
		We prove that a division property, equivalent to Joris's result,  
		is valid in a wide variety of ultradifferentiable classes. 
		Generally speaking, it holds in all dimensions for non-quasianalytic classes. 
		In the quasianalytic case we have general validity in dimension one, but 
		we also get validity in all dimensions for certain quasianalytic classes.    
	\end{abstract}

\thanks{AR was supported by FWF-Project P 32905-N, DNN and GS by FWF-Project P 33417-N}
\keywords{Joris theorem, division property, ultradifferentiable classes, (non-)quasianalytic, holomorphic approximation, almost analytic extension}
\subjclass[2020]{26E10, 
30E10, 
32W05, 
46E10,  
46E25, 
58C25} 

\maketitle

	\section{Introduction}

	A remarkable theorem of Joris \cite[Th\'eor\`eme 2]{Joris82} states:
	if $f:\R \rightarrow \R$ is a function and $p,q$ are relatively prime positive integers, then
	\begin{equation}\label{eq:smooth}
		f^p,f^q \in C^\infty \implies f \in C^\infty.
	\end{equation}
	Since smoothness can be tested along smooth curves by a theorem of
	Boman \cite{Boman67}, one immediately infers that the implication \eqref{eq:smooth} holds on arbitrary
	open subsets of $\R^d$, $d \ge 1$, and on smooth manifolds.
	(On the other hand, the regularity of a \emph{single} power of a function generally 
	says nothing about the regularity of the function itself; 
	e.g.\ $(\mathbf{1}_{\Q} - \mathbf{1}_{\R \backslash \Q})^2 = 1$,
	where $\mathbf{1}_A$ is the indicator function of a set $A$.)

	It was soon realized that the statement also holds for complex valued functions 
	and it led to the study of so-called pseudoimmersions 
	\cite{DuncanKrantzParks85,JorisPreissmann87,JorisPreissmann90,Rainer18}.
	A simple proof based on ring theory was given by \cite{AmemiyaMasuda89}.

	Only recently Thilliez \cite{Thilliez:2020ac} showed that Joris's result carries over to
	Denjoy--Carleman classes of Roumieu type $\cE^{\{M\}}$.
	These are ultradifferentiable classes of smooth functions defined by certain growth properties imposed
	upon the sequence of iterated derivatives in terms of a weight sequence $M$
	(which in view of the Cauchy estimates measures the deviation from analyticity).

	By extracting the essence of Thilliez's proof, we show in this paper that 
	a broad variety of ultradifferentiable classes has a division property equivalent to Joris's result.
	Let $\cS$ be a subring (with multiplicative identity) of the ring of germs at $0 \in \R^d$ of complex valued $C^\infty$-functions. 
	We say that $\cS$ has the \emph{division property $(\sD)$} if for any function germ $f$ at $0 \in \R^d$ 
	we have
	\begin{equation} \label{eq:D}
		\big(j \in \N_{\ge1},~   f^j,f^{j+1} \in \cS\big) \implies f \in \cS.		
	\end{equation}	
	If $\cS$ has property $(\sD)$, then Joris's theorem holds in $\cS$. 
	Indeed, suppose that $p_1,p_2$ are relatively prime positive integers 
	and $f^{p_1},f^{p_2} \in \cS$. All integers $j \ge p_1p_2$ can be written $j=a_1p_1 + a_2 p_2$ for $a_1,a_2 \in \N$,  
	see \cite[p.270]{Joris82}.
	Hence $f^j \in \cS$ for all $j \ge p_1p_2$. 
	Since two consecutive integers are relatively prime, 
	also the converse holds.\footnote{In an earlier version of the paper we   
	considered the division property
	$\big(j \in \N_{\ge1},~  g,q,fg \in \cS,~ f^j= qg\big) \implies f \in \cS$ which resulted from our wish to prove an 
	ultradifferentiable version of the division theorem \cite[Theorem 1]{JorisPreissmann90}. 
	But this property is equivalent to \eqref{eq:D}.  
	}

	\subsection{Results}
	Let us give an overview of our results.  

	The rings of germs in \emph{one dimension} $d=1$ of the following ultradifferentiable classes have property $(\sD)$: 
	\begin{itemize}
		\item $\cE^{[M]}$, Denjoy--Carleman class of Roumieu (\Cref{thm:ThilliezJoris,thm:ThilliezJorisQ})
		and Beurling type (\Cref{thm:ThilliezJorisBQ}),
		\item $\cE^{[\om]}$, Braun--Meise--Taylor classes of Roumieu and Beurling type
		(\Cref{thm:weightfunction}),
		\item $\cE^{[\fM]}$, ultradifferentiable classes defined by weight matrices
		of Roumieu and Beurling type
		(\Cref{thm:matrix1dimJoris}).
	\end{itemize}
	It is understood that certain minimal regularity properties of the weights are assumed (see \Cref{table}) 
	which in particular guarantee that the sets of germs are indeed rings.
	(Note that by convention $[\cdot]$ stands for $\{\cdot\}$, i.e., Roumieu, as well as $(\cdot)$, i.e., Beurling.)

	Interestingly, the proof in one dimension works for quasianalytic and non-quasianalytic classes alike.
	But the tool used to reduce the multidimensional to the one-dimensional statement
	is only available in the non-quasianalytic Roumieu case (\cite{KMRc}, \cite{Schindl14}).
	The (multidimensional) Beurling case can often be reduced to the corresponding Roumieu case.
	Hence we obtain the following multidimensional non-quasianalytic results. 
	The rings of germs in \emph{all dimensions} $d$ of the following ultradifferentiable classes have property $(\sD)$:
	\begin{itemize}
		\item $\cE^{[M]}$, non-quasianalytic Denjoy--Carleman class of Roumieu (\Cref{thm:ThilliezJoris})
		and Beurling type (\Cref{thm:sequenceJoris}),
		\item $\cE^{[\om]}$, non-quasianalytic Braun--Meise--Taylor classes of Roumieu and Beurling type
		(\Cref{thm:functionJoris}),
		\item $\cE^{\{\fM\}}$, non-quasianalytic ultradifferentiable classes defined by weight matrices
		of Roumieu type
		(\Cref{thm:matrixmultdimJoris}).
	\end{itemize}

	For \emph{quasianalytic} Denjoy--Carleman classes of Roumieu type $\cE^{\{M\}}$ in \emph{one} dimension
	the implication \eqref{eq:D} follows from the stronger result, due to Thilliez \cite{Thilliez10}, that
	$C^\infty$-solutions of a polynomial equation
	\begin{equation}
		z^n + a_1 z^{n-1} + \cdots + a_{n-1} z + a_n = 0,
	\end{equation}
	where the coefficients $a_j$ are germs at $0 \in \R$ of $\cE^{\{M\}}$-functions,
	are of class $\cE^{\{M\}}$ (under weak assumptions on $M$).
	This is false for non-quasianalytic classes.
	But it seems to be unknown whether, in the presence of quasianalyticity,
	it holds in higher dimensions.
	In fact, quasianalytic ultradifferentiability cannot be tested on quasianalytic curves (or lower dimensional plots) 
	even if the function in question is known to be smooth (\cite{Jaffe16,Rainer:2019aa}). 

	Hence we think that it is interesting that, 
	combining our proof with a description of certain quasianalytic classes $\cE^{\{M\}}$
	as an intersection of suitable non-quasianalytic ones (due to \cite{KMRq}),
	we obtain that these quasianalytic classes have property $(\sD)$ in \emph{all} dimensions (see \Cref{thm:quasiJoris} and 
	also \Cref{rem:intersectableOmega,rem:intersectableF}).

	Since all considered regularity classes are local, the results for germs immediately give 
	corresponding results for functions on open sets.

	\subsection{Summary of the results}
	
	We list in \Cref{table} the ultradifferentiable rings of germs known to have property $(\sD)$,
    together with the needed assumptions on the weights and the respective references.
    All germs are function germs at $0$ in $\R^d$ for some dimension $d$. The dimension 
    is added as a left subscript, e.g., $\tensor[_d]{\cE}{^{[M]}}$ denotes the ring of germs at $0 \in \R^d$ 
    of $\cE^{[M]}$-functions.
	All notions will be defined below.

	The Roumieu parts of the results in the first and the fifth row are due to Thilliez \cite{Thilliez:2020ac}; see \Cref{,sec:DCR,sec:DCB}.

	\begin{table}[ht]
	\renewcommand\arraystretch{1.1}
	\caption{Ultradifferentiable rings of germs having property $(\sD)$}\label{table}
	\begin{tabular}{c|l|l}
		\multicolumn{2}{l|}{\textbf{Quasianalytic ring of germs} $\vphantom{\dfrac{1}{2}}$}   & \textbf{Reference} \\
		\hline
		$\tensor[_1]{\cE}{^{[M]}}$ &
		\begin{tabular}{l}
			derivation closed 
			\\
			$m$ log-convex \\
			$m_k^{1/k} \to \infty$ (Beurling)
		\end{tabular}	
		&
		\Cref{thm:ThilliezJorisQ,thm:ThilliezJorisBQ}
		\\
		\hline
		$\tensor[_1]{\cE}{^{[\om]}}$ $\vphantom{\dfrac{1}{2}}$ & 
		\begin{tabular}{l}
			$\om$  concave
		\end{tabular}
		 & Theorem \ref{thm:weightfunction}\\
		\hline
		$\tensor[_1]{\cE}{^{[\fM]}}$ &
		\begin{tabular}{l}
			[regular]\\
			{[moderate growth]}
		\end{tabular}
		 & Theorem \ref{thm:matrix1dimJoris}\\
		\hline
		$\tensor[_d]{\cE}{^{\{M\}}}$ &
		\begin{tabular}{l}
			intersectable 
			\\
		 	moderate growth 
		\end{tabular}
	  & Theorem \ref{thm:quasiJoris} \\
		\hline
		\multicolumn{2}{l|}{\textbf{Non-quasianalytic ring of germs} $\vphantom{\dfrac{1}{2}}$}   & \textbf{Reference} \\
		\hline
		$\tensor[_d]{\cE}{^{[M]}}$ &
		\begin{tabular}{l}
			moderate growth 
			\\
			$m$ log-convex
		\end{tabular}
		&
		\Cref{thm:ThilliezJoris,thm:sequenceJoris}	
		\\
		\hline
		$\tensor[_d]{\cE}{^{[\om]}}$ $\vphantom{\dfrac{1}{2}}$ & 
		\begin{tabular}{l}
			$\om$  concave
		\end{tabular}
		 & Theorem \ref{thm:functionJoris}\\
		\hline
		$\tensor[_1]{\cE}{^{[\fM]}}$ &
		\begin{tabular}{l}
			[regular]\\
			{[moderate growth]}
		\end{tabular}
		& Theorem \ref{thm:matrix1dimJoris}\\
		\hline
		$\tensor[_d]{\cE}{^{\{\fM\}}}$ &
		\begin{tabular}{l}
			$\{\text{regular}\}$ \\
		 	$\{\text{moderate growth}\}$ 
		\end{tabular}
		& Theorem \ref{thm:matrixmultdimJoris}\\
	\end{tabular}
\end{table}

We remark that non-quasianalytic Denjoy--Carleman classes $\cE^{\{M\}}$, 
where the weight sequence $M$ lacks moderate growth, do not have property $(\sD)$ in general;
see \cite[Remark 2.2.3]{Thilliez:2020ac}.
The moderate growth condition is rather restrictive (e.g., it implies that the class $\cE^{\{M\}}$ 
is contained in a Gevrey class).
The consideration of the classes $\cE^{[\om]}$ and $\cE^{[\fM]}$ allows to overcome this restriction in the sense
that the implication \eqref{eq:D} holds under weaker moderate growth conditions.

	\subsection{Strategy of the proof} \label{sec:strategy}

	Thilliez's proof of Joris's theorem for $\cE^{\{M\}}$ consists of the following two steps:
	\begin{enumerate}[label=\textnormal{(\roman*)}]
		\item The class $\cE^{\{M\}}$ admits a description by holomorphic approximation
		which is based on a result of Dynkin \cite{Dynkin80} on almost analytic extensions
		and a related $\ol\p$-problem.
		\item If $f^j$, $f^{j+1}$ are of class $\cE^{\{M\}}$ and $g_\ep$, $h_\ep$ are respective holomorphic approximations,
		the quotient $h_\ep/g_\ep$ is a naive candidate for a holomorphic approximation of $f$.
		In order to avoid small divisors one considers
		\[
			u_\ep = \vh_\ep \frac{\ol g_\ep h_\ep}{\max\{|g_\ep|,r_\ep\}^2},
		\]
		where $\vh_\ep$ is a suitable cutoff function and $r_\ep>0$.
		For good choices of $r_\ep$ the function $u_\ep$ has uniform bounds and is close to $f$.
		The solution of a $\ol\p$-problem is used to modify $u_\ep$ in order to obtain a holomorphic approximation of $f$.
		By step (i) we may conclude that $f$ belongs to $\cE^{\{M\}}$.
	\end{enumerate}

	Following the same strategy, we will work with weight matrices $\fM$, since they provide a framework for ultradifferentiability (\Cref{sec:matrix})
	which encompasses Denjoy--Carleman classes (\Cref{sec:sequence}) and Braun--Meise--Taylor classes (\Cref{sec:functions}).
	In \Cref{sec:holapprox} we prove a general characterization result by holomorphic approximation for $\cE^{[\fM]}$
	(\Cref{prop:332v}) which extends step (i);
	it builds on the description by almost analytic extension presented in our recent paper \cite{FurdosNenningRainer}.
	Then we execute a version of step (ii) under a quite minimal set of assumptions, see \Cref{lem:corelemma}. 
	It enables us to easily deduce the main results in \Cref{sec:proof}.

	\section{Denjoy--Carleman classes have property \texorpdfstring{$(\sD)$}{(D)}}
	\label{sec:sequence}
	
	\subsection{Weight sequences and Denjoy--Carleman classes} \label{sec:weightsequences}
	Let $\mu=(\mu_k)$ be a positive increasing (i.e.\ $\mu_k \le \mu_{k+1}$) sequence with $\mu_0=1$.
	We define a sequence $M$ by setting $M_k:= \mu_1 \cdots \mu_k$, $M_0 := 1$,
	and a sequence $m$ by $m_k:=\frac{M_k}{k!}$. Clearly, $\mu$ uniquely determines $M$ and $m$, and vice versa.
	In analogy we shall use sequences $N \leftrightarrow n \leftrightarrow \nu$,
	$L \leftrightarrow \ell \leftrightarrow \la$, etc.

	That $\mu$ is increasing means that $M$ is \emph{log-convex},
	i.e., $\log M$ is convex or, equivalently, $M_k^2\le M_{k-1}M_{k+1}$ for all $k$.
	If in addition $M_k^{1/k} \rightarrow \infty$, we say that $M$ is a \emph{weight sequence}.\par

	Sometimes we will make the stronger assumption that $m$ is log-convex.

	For $\si > 0$ and open $U \subseteq \R^d$, one defines the Banach space
	\[
	\cB^M_\si(U)
	:=\Big\{ f \in C^\infty(U):~ \|f\|_{\si,U}^M:= \sup_{x \in U,\, \al \in \N^d}  \frac{|\partial^\al f (x)|}{\si^{|\al|}M_{|\al|}}<\infty \Big\}
	\]
	and the (local) \emph{Denjoy--Carleman classes of Roumieu type}
	\[
	\cE^{\{M\}}(U):=  \on{proj}_{V \Subset U } \on{ind}_{\si > 0} \cB^{M}_\si(V).
	\]
	For later reference we also consider the global class $\cB^{\{M\}}(V) := \on{ind}_{\si > 0} \cB^{M}_\si(V)$. 	
	Replacing the existential quantifier for $\si$ by a universal quantifier, we
	find the \emph{Denjoy--Carleman classes of Beurling type}
	\[
	\cE^{(M)}(U):=  \on{proj}_{V \Subset U } \on{proj}_{\si > 0} \cB^{M}_\si(V)
	\]
	and $\cB^{(M)}(V) := \on{proj}_{\si > 0} \cB^{M}_\si(V)$.
	We use the notation $\cE^{[M]}$ for both $\cE^{\{M\}}$ and $\cE^{(M)}$, similarly for $\cB^{[M]}$, etc. 

	For positive sequences $M,N$, we write
	$M \preceq N$ if $\sup_{k \in \N} \big(\frac{M_k}{N_k}\big)^{1/k}<\infty$
	and $M \lhd N$ if $\lim_{k \rightarrow \infty} \big(\frac{M_k}{N_k}\big)^{1/k} = 0$.
	We have (cf. \cite[Proposition 2.12]{RainerSchindl14})
	\begin{align*}
		M \preceq N \quad &\Leftrightarrow \quad \cE^{[M]}(U) \subseteq \cE^{[N]}(U),\\
		M \lhd N \quad &\Leftrightarrow \quad \cE^{\{M\}}(U) \subseteq \cE^{(N)}(U),
	\end{align*}
	where for ``$\Leftarrow$'' one has to assume that $M$ is a weight sequence.
	Note that $\cE^{\{(k!)\}}(U)$ coincides with $C^\om(U)$ and so
	the class of real analytic functions 
	is contained in $\cE^{(M)}$($\subseteq \cE^{\{M\}}$) if and only if $m_k^{1/k} \to \infty$.

	Log-convexity of $M$ implies that
	$\cE^{[M]}(U)$ is closed under pointwise multiplication of functions.
	Additional regularity properties for $M$ endow $\cE^{[M]}(U)$ with additional structure, e.g.,
	log-convexity of $m$ implies closedness under composition of functions.
	A crucial assumption in \cite{Thilliez:2020ac} is \emph{moderate growth} of $M$, which reads as follows
	\begin{equation}
	\label{eq:mg}
	\exists C>0~\forall k,j \in \N: ~M_{k+j} \le C^{k+j}M_kM_j.
	\end{equation}
	It implies \emph{derivation closedness}
	\begin{equation}
	\label{eq:dc}
	\exists C>0~\forall k \in \N: ~M_{k+1}\le C^{k+1}M_k.
	\end{equation}
	The last property we need to mention is \emph{non-quasianalyticity} of $M$, that is
	\begin{equation}
	\label{eq:Mnq}
	\sum_{k=1}^\infty \frac{1}{\mu_k} < \infty, 
	\quad \text{ or equivalently }\quad \sum_{k=1}^\infty \frac{1}{M_k^{1/k}} < \infty.
	\end{equation}
	By the Denjoy--Carleman theorem, this condition is equivalent to the existence of
	non-trivial functions with compact support in $\cE^{[M]}(U)$. 
	It is well-known that non-quasianalyticity implies $m_k^{1/k}\rightarrow\infty$.

	Let $\tensor[_d]{\cE}{^{[M]}}$ denote the \emph{ring of germs} at $0 \in \R^d$ of complex valued $\cE^{[M]}$-functions;
	here we assume that $M$ is a weight sequence in order to have a ring.

	\begin{remark}
	There is a slight mismatch
	between our notation (also used in \cite{FurdosNenningRainer})
	and that of \cite{Thilliez:2020ac} (and \cite{RainerSchindl12}).
	We write $M_j=m_jj!$ for weight sequences, so our $m$ corresponds to $M$ in \cite{Thilliez:2020ac}. 		
	\end{remark}

	\subsection{Associated functions}

	Let $m = (m_k)$ be a positive sequence with $m_0=1$ and  $m_k^{1/k} \to \infty$. We define the function
	\begin{equation} \label{h}
		 h_{m}(t) := \inf_{k \in \N} m_k t^k, \quad \text{ for } t > 0, \quad \text{ and } \quad h_{m}(0):=0,
	\end{equation}
	which is is increasing, continuous on $[0,\infty)$, and positive for $t>0$. For large $t$ we have $h_{m}(t) = 1$.
	Furthermore, we need
	\begin{align}
  	\label{counting2}
  	\ol \Ga_{m}(t) &:= \min\{k : h_{m}(t) =  m_k t^k\}, \quad t > 0,
	\end{align}
	and, provided that $m_{k+1}/m_{k} \to \infty$,
	\begin{align}
	\ul \Ga_{m} (t) &:=  \min\Big\{k : \frac{m_{k+1}}{m_k}  \ge \frac{1}{t} \Big\}, \quad t > 0.
	\end{align}
	We trivially have $\ul \Ga_m \le \ol \Ga_m$. 
	If $m$ is log-convex, then $\ol \Ga_{m} = \ul \Ga_{m}$.
	
	We shall use these functions for $m_k = M_k/k!$, where $M$ is a weight sequence satisfying $m_k^{1/k}\to \infty$. 
	Then $m_{k+1}/m_{k}\to \infty$ (since $M_k^{1/k} \le \mu_k$ for all $k$).

	\subsection{Regular weight sequences}
	A weight sequence $M$ is said to be \emph{regular} if $m_k^{1/k} \to \infty$, 
	$M$ is derivation closed, and there exists a constant $C\ge 1$ such that $\ol \Ga_m(Ct) \le \ul \Ga_m(t)$ for all $t>0$.

	\subsection{Denjoy--Carleman classes of Roumieu type have property \texorpdfstring{$(\sD)$}{(D)}} \label{sec:DCR}

	\begin{theorem}[Non-quasianalytic {$\tensor[_d]{\cE}{^{\{M\}}}$}]
		\label{thm:ThilliezJoris}
		Let $M$ be a non-quasianalytic regular weight sequence 
		of moderate growth.
		Then $\tensor[_d]{\cE}{^{\{M\}}}$ has property $(\sD)$.
	\end{theorem}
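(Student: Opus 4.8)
The plan is to prove the one-dimensional case by holomorphic approximation and then to reduce every dimension to it by testing along curves. Before anything else, note that $f^j,f^{j+1}\in\tensor[_d]{\cE}{^{\{M\}}}\subseteq C^\infty$ and that $j,j+1$ are coprime, so the classical Joris theorem \cite{Joris82} already yields $f\in C^\infty$. Hence $f$ is a bona fide smooth germ at $0$, and the only task is to upgrade its regularity to $\cE^{\{M\}}$; this is exactly where property $(\sD)$ goes beyond the $C^\infty$ statement.

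For $d=1$ I would use the characterization of $\tensor[_1]{\cE}{^{\{M\}}}$ by holomorphic approximation (\Cref{prop:332v}): a smooth germ lies in the class precisely when it admits holomorphic approximants on shrinking complex neighborhoods of $0\in\C$ whose error is measured by the associated function $h_m$, the regularity condition $\ol\Ga_m(Ct)\le\ul\Ga_m(t)$ controlling the relevant comparisons. Applying this to $f^j$ and $f^{j+1}$ produces holomorphic approximants $g_\ep$ of $f^j$ and $h_\ep$ of $f^{j+1}$. Away from the zeros of $g_\ep$ the quotient $h_\ep/g_\ep$ approximates $f^{j+1}/f^j=f$; to tame the small divisors I would pass, following Thilliez, to the regularized candidate
\[
u_\ep=\vh_\ep\,\frac{\ol g_\ep\, h_\ep}{\max\{|g_\ep|,r_\ep\}^2},
\]
with $\vh_\ep$ a cutoff and $r_\ep>0$ a threshold, estimate $\db u_\ep$, and solve the associated $\db$-problem to obtain a correction $w_\ep$ for which $F_\ep:=u_\ep-w_\ep$ is holomorphic and still approximates $f$ to the order demanded by \Cref{prop:332v}. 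This is precisely the content of \Cref{lem:corelemma}; feeding $F_\ep$ back into \Cref{prop:332v} gives $f\in\tensor[_1]{\cE}{^{\{M\}}}$.

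To pass to general $d$ I would test along curves. Because $M$ is a non-quasianalytic weight sequence of moderate growth, $\tensor[_d]{\cE}{^{\{M\}}}$ is closed under composition and enjoys a Boman-type characterization \cite{KMRc,Schindl14}: a smooth germ belongs to the class if and only if its composite with every $\cE^{\{M\}}$-curve does. So for any $\cE^{\{M\}}$-curve $c$ through $0$ one has $(f\circ c)^j=f^j\circ c$ and $(f\circ c)^{j+1}=f^{j+1}\circ c$ in $\tensor[_1]{\cE}{^{\{M\}}}$, whence the one-dimensional case gives $f\circ c\in\tensor[_1]{\cE}{^{\{M\}}}$. Since $c$ was arbitrary, the curve-testing theorem yields $f\in\tensor[_d]{\cE}{^{\{M\}}}$.

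The hard part will be the one-dimensional core step, i.e.\ the small-divisor analysis hidden in \Cref{lem:corelemma}: one must choose the threshold $r_\ep$ so that $u_\ep$ stays uniformly bounded and close to $f$ while the $\db$-correction $w_\ep$ does not destroy the sharp $h_m$-type decay required by \Cref{prop:332v}; here moderate growth and regularity of $M$ are exactly what keep the approximation defects and the $\db$-estimates compatible. The multidimensional reduction is comparatively soft, but it is worth stressing that the curve-testing tool used there is available only because $M$ is non-quasianalytic, the analogous statement being false in the quasianalytic regime.
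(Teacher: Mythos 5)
Your proposal is correct and follows essentially the same route as the paper: the one-dimensional core via the holomorphic-approximation characterization (\Cref{prop:332v}) combined with the Thilliez-style regularized quotient and $\db$-correction of \Cref{lem:corelemma}, followed by reduction of dimension $d$ to dimension one through composition closedness and curve testing in the non-quasianalytic Roumieu setting. The paper packages this as the special case $\fM=\{M\}$ of \Cref{thm:matrixmultdimJoris}, itself deduced from \Cref{thm:matrix1dimJoris}; the only slight imprecision is that composition closedness is derived from regularity of $M$ (via the almost analytic extension description, \Cref{thm:roumieuextension}) rather than from moderate growth as you state, but this is immaterial under the theorem's hypotheses.
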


	This is a special case of \Cref{thm:matrixmultdimJoris} below (cf.\ \Cref{thm:BMTmatrix}). It implies Thilliez's result \cite[Corollary 2.2.5]{Thilliez:2020ac}.

	A quasianalytic one-dimensional version follows from a stronger result in \cite{Thilliez10}: 

	\begin{theorem}[Quasianalytic {$\tensor[_1]{\cE}{^{\{M\}}}$}]
		\label{thm:ThilliezJorisQ}
		Let $M$ be a quasianalytic derivation closed weight sequence such that $m$ is log-convex.
		Then $\tensor[_1]{\cE}{^{\{M\}}}$ has property $(\sD)$.
	\end{theorem}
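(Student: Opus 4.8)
The plan is to reduce the statement to two external inputs: the classical $C^\infty$ version of Joris's theorem \cite{Joris82} and the quasianalytic polynomial-root theorem of Thilliez \cite{Thilliez10}. So let $f$ be a function germ at $0 \in \R$ with $f^j, f^{j+1} \in \tensor[_1]{\cE}{^{\{M\}}}$ for some $j \in \N_{\ge 1}$; we must show that $f \in \tensor[_1]{\cE}{^{\{M\}}}$.

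First I would note that $f$ is automatically smooth. Indeed, $f^j$ and $f^{j+1}$ lie in $\cE^{\{M\}} \subseteq C^\infty$, and since $j$ and $j+1$ are relatively prime, Joris's theorem \eqref{eq:smooth} gives $f \in C^\infty$. This step uses both hypotheses in an essential way and cannot be dispensed with: a single smooth power does not force smoothness of the root (the continuous germ $t \mapsto |t|$ has smooth square $t^2$ yet is not itself smooth), so the two coprime powers are needed precisely here.

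Next I would exhibit $f$ as a $C^\infty$ solution of a monic polynomial equation with coefficients in $\cE^{\{M\}}$. Writing $b := f^{j+1} \in \tensor[_1]{\cE}{^{\{M\}}}$, the germ $f$ satisfies
\[
    z^{j+1} - b = 0,
\]
a monic equation of degree $j+1$ all of whose coefficients (zero except for the constant term $-b$) belong to $\cE^{\{M\}}$. Then I would invoke Thilliez's theorem \cite{Thilliez10}: under the present hypotheses on $M$, every smooth solution of such an equation is of class $\cE^{\{M\}}$. This immediately yields $f \in \tensor[_1]{\cE}{^{\{M\}}}$.

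The only genuine obstacle is to confirm that the hypotheses under which \cite{Thilliez10} is established are implied by those assumed here (quasianalyticity, derivation closedness, and log-convexity of $m$), and that the cited result applies to our complex-valued germ $f$ and complex coefficient $b$ --- either directly, or after reformulating $z^{j+1}=b$ as a system of two real polynomial equations for $\Re f$ and $\Im f$ with $\cE^{\{M\}}$ coefficients, using that $\cE^{\{M\}}$ is stable under the algebraic operations involved. Granting this verification, the statement is simply the concatenation of the two cited theorems.
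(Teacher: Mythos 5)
Your proposal is correct and is essentially the paper's own argument: the paper proves \Cref{thm:ThilliezJorisQ} precisely by invoking Thilliez's theorem from \cite{Thilliez10} on $C^\infty$-solutions of monic polynomial equations with $\cE^{\{M\}}$-coefficients (whose hypotheses --- quasianalyticity, derivation closedness, log-convexity of $m$ --- are exactly those assumed in the statement), applied to the equation $z^{j+1}-f^{j+1}=0$. Your preliminary step of first securing $f\in C^\infty$ via the classical Joris theorem for the coprime exponents $j$, $j+1$ (valid for complex-valued functions, as the paper notes in the introduction) is the same reduction the paper leaves implicit when it says the implication \eqref{eq:D} ``follows from the stronger result'' of \cite{Thilliez10}.
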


	\subsection{Denjoy--Carleman classes of Beurling type  have property \texorpdfstring{$(\sD)$}{(D)}} \label{sec:DCB}

	Let us deduce Beurling versions of \Cref{thm:ThilliezJoris,thm:ThilliezJorisQ}.
	We use the following lemma based on \cite[Lemma 6]{Komatsu79b} and \cite[Lemma 7.5]{FurdosNenningRainer}.

	\begin{lemma} \label{lem:redDC}
		   Let $L, M$ be positive sequences satisfying $L \lhd M$. 
   		   Suppose that $m$ is log-convex and satisfies $m_k^{1/k} \to \infty$.
   		   Then there exists a weight sequence $S$ such that $s$ is log-convex, $s_k^{1/k} \to \infty$, and
   		   $L\leq S \lhd M$. 
   		   Additionally, we may assume:
   		   \begin{enumerate}[label=\textnormal{(\roman*)}]
   		   	\item $S$ has moderate growth, if $M$ has moderate growth.
   		   	\item $S$ is derivation closed, if $M$ is derivation closed.
   		   	\item $S$ is non-quasianalytic, if $M$ is non-quasianalytic.
   		   \end{enumerate}  
	\end{lemma}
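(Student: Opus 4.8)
The plan is to construct the intermediate weight sequence $S$ explicitly from the given data, interpolating between $L$ and $M$ while preserving log-convexity and the extra structural properties listed in (i)--(iii). Since $L \lhd M$ means $(L_k/M_k)^{1/k} \to 0$, and since $m$ is log-convex with $m_k^{1/k} \to \infty$, the idea is to build $s_k := m_k \cdot \theta_k$ where $\theta = (\theta_k)$ is an auxiliary log-convex sequence tending to $0$ in the sense that $\theta_k^{1/k} \to 0$, chosen slowly enough that we still have $L \leq S$. Equivalently, working at the level of $S$ directly, I would set $S_k := M_k \epsilon_k$ for a suitable log-convex multiplier $\epsilon_k$ with $\epsilon_k^{1/k} \to 0$; the two viewpoints are interchangeable since $s_k = S_k/k!$ and log-convexity transfers between $S$ and $s$ only up to the factor $k!$, so I must keep careful track of whether I am imposing log-convexity on $S$, on $s$, or on the multiplier.

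First I would make the hypothesis $L \lhd M$ quantitative: choose an increasing sequence $k_0 < k_1 < k_2 < \cdots$ such that $(L_k/M_k)^{1/k} \le 2^{-j}$ for all $k \ge k_j$. Then I define a piecewise-geometric multiplier that decays like $2^{-j}$ on the block $k_j \le k < k_{j+1}$, but smoothed so that the resulting sequence is log-convex. The standard device, following the cited \cite[Lemma 6]{Komatsu79b} and \cite[Lemma 7.5]{FurdosNenningRainer}, is to define $s$ via its quotient sequence: prescribe the ratios $\sigma_k := s_k/s_{k-1}$ to be an increasing sequence interpolating between the regime forced by $L$ from below and by $M$ from above. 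Concretely, one takes $\sigma_k$ to be a nondecreasing sequence lying between $\ell_k/\ell_{k-1}$ and $m_k/m_{k-1}$ in an averaged sense; monotonicity of $\sigma$ is exactly log-convexity of $s$, and $\sigma_k \to \infty$ gives $s_k^{1/k} \to \infty$. The conditions $L \le S$ and $S \lhd M$ then translate into lower and upper control on the partial products $\prod \sigma_k$, which one arranges by the block construction above.

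For the preservation properties, I would verify each stability condition directly from the block structure of $\sigma$. Moderate growth \eqref{eq:mg} and derivation closedness \eqref{eq:dc} for $M$ are conditions on the ratios $M_{k+1}/M_k = \mu_{k+1}$ (bounded geometric comparisons of products of $\mu$'s); since $S$ is sandwiched between $L$ and $M$ with $S \lhd M$, and since the multiplier I introduce is itself slowly varying (its own ratios $\epsilon_k/\epsilon_{k-1}$ are bounded and eventually close to $1$ within each block), the corresponding estimates for $S$ follow from those for $M$ by absorbing the multiplier ratios into the constant $C$. Non-quasianalyticity \eqref{eq:Mnq}, i.e.\ $\sum 1/S_k^{1/k} < \infty$, is the most delicate of the three, because making $S$ smaller than $M$ (as $S \lhd M$ demands) pushes $S_k^{1/k}$ down and could destroy summability; here I must choose the block jumps $k_j$ sparse enough, and the decay of $\epsilon_k$ gentle enough, that $\sum 1/S_k^{1/k}$ remains finite, exploiting that $M$ itself is non-quasianalytic to dominate the tail.

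The main obstacle I anticipate is reconciling the three simultaneous demands on the multiplier: it must decay fast enough that $S \lhd M$, stay above $L$ so that $L \le S$, remain log-convex, and yet not decay so fast that non-quasianalyticity is lost. These pull in opposite directions, and the resolution is a careful calibration of the block lengths $k_{j+1}-k_j$ against the decay rate $2^{-j}$, which is precisely the technical heart of the Komatsu-type interpolation argument. I would handle this by choosing the blocks to grow rapidly (so the logarithmic decay per block is spread over many indices, keeping $\sigma$ barely below $\mu$ and $1/S_k^{1/k}$ summable) while still guaranteeing $(S_k/M_k)^{1/k} \to 0$ along the full sequence. Once the multiplier is pinned down, properties (i)--(iii) should each reduce to routine, if somewhat tedious, estimates on geometric products.
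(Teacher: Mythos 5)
Your plan defers its hardest step to the phrase ``smoothed so that the resulting sequence is log-convex,'' and that is precisely where it does not close. Any multiplier with $\epsilon_k^{1/k}\to 0$ must have quotients $\epsilon_k/\epsilon_{k-1}$ tending (at least on average) to $0$, while log-convexity of $s_k=m_k\epsilon_k$ requires the combined quotients $\sigma_k=(m_k/m_{k-1})\,(\epsilon_k/\epsilon_{k-1})$ to be \emph{increasing}. So every drop of the multiplier's quotient must be matched, at the same index, by at least as large an increase of $m_k/m_{k-1}$ --- and those increases are given, not chosen. Consequently the block boundaries cannot be calibrated only against $(L_k/M_k)^{1/k}$ and the tails of $\sum M_k^{-1/k}$ (both of which merely push $k_j$ upward, so the ``opposite directions'' you worry about there are in fact compatible); they must additionally be synchronized with the growth of the quotients $m_k/m_{k-1}$. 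This is exactly where the hypothesis that $m$ is log-convex with $m_k^{1/k}\to\infty$ (equivalently $m_k/m_{k-1}\nearrow\infty$) enters, a hypothesis your construction states but never uses, and it is exactly the content of the selection lemma the paper leans on (\cite[Lemme 16]{ChaumatChollet94}, cf.\ \cite[Lemma 6]{Komatsu79b}): given $\beta_k\searrow 0$ and $\sum 1/\mu_k<\infty$, it produces $\delta_k\nearrow\infty$ with $\delta_k\beta_k\to 0$, $\mu_k/\delta_k$ increasing, and $\sum\delta_k/\mu_k<\infty$. Without proving a lemma of this type, the ``routine, if somewhat tedious, estimates'' do not exist. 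A second, smaller gap: your moderate-growth argument (``ratios close to $1$ within each block, absorb into the constant'') is insufficient because $j$, $k$, $j+k$ may lie in different blocks; what actually saves (i) is a global property of the multiplier, namely that a log-concave $\epsilon$ with $\epsilon_0=1$ satisfies $\epsilon_{j+k}\le\epsilon_j\epsilon_k$, so that $S_{j+k}\le C^{j+k}S_jS_k$ follows from the same inequality for $M$.

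It is also worth seeing how the paper sidesteps the simultaneous requirements that make your one-shot construction delicate. The base statement together with (i) is not reproved at all but cited from \cite[Lemma 7.5]{FurdosNenningRainer}; (ii) comes for free from the characterization that a weight sequence is derivation closed iff $M_k\le C^{k^2}$, which any weight sequence $S\lhd M$ inherits; and (iii) is handled by a bootstrap: first build a non-quasianalytic weight sequence $N$ with $L\le N\lhd M$ (only $N$, not $n$, need be log-convex, which is what Chaumat--Chollet delivers directly), then apply the base lemma to the pair $N\lhd M$ to regularize; the resulting $S\ge N$ is automatically non-quasianalytic since non-quasianalyticity is monotone under pointwise domination. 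This two-step structure is precisely what avoids having to achieve log-convexity of $s$ and non-quasianalyticity in a single construction --- the combination your plan must, but as written does not, handle.
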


	\begin{proof}
		Only the supplements (ii) and (iii) were not already proved in \cite[Lemma 7.5]{FurdosNenningRainer}.

		(ii) follows from the fact that
		a weight sequence $M$ is derivation closed if and only if there is a constant $C\ge 1$ such that $M_k \le C^{k^2}$ for all $k$, see \cite{Mandelbrojt52,Matsumoto84}.
		Since $S$ is a weight sequence and $S \lhd M$, also $S$ is derivation closed, by this criterion.

		(iii) It suffices to show that there exists a non-quasianalytic weight sequence $N$ such that $L \le N \lhd M$. 
		Then we apply the lemma to $N \lhd M$ and obtain a weight sequence $S$ with $N \le S \lhd M$ having all 
		desired properties. 

		Let us show the existence of $N$. 
		By $L \lhd M$, we have $\be_k := \sup_{p \ge k} \big( \frac{L_p}{M_p} \big)^{1/p} \searrow 0$.
 		Applying \cite[Lemme 16]{ChaumatChollet94} (see also \cite[Lemma 4.1]{Schmets:2003aa}) 
 		to $\be_k$ and $\al_k = \ga_k := \frac{1}{\mu_k}$, yields an increasing sequence $\de = (\de_k)$ such that
		\begin{gather}
			\label{eq:divergent}
			\de_k \rightarrow \infty,\\
			\label{eq:zerosequence}
			\de_k \be_k \rightarrow 0,\\
			\label{eq:decreasing}
			\frac{\mu_k}{\de_k}  \text{ is increasing},\\
			\label{eq:nqthm}
			\sum_{k=1}^\infty \frac{\de_k}{\mu_k} \le 8 \de_1 \sum_{k=1}^\infty \frac{1}{\mu_k} < \infty.
		\end{gather}
		Then $N_k := \frac{\mu_1 \cdots \mu_k}{\de_1 \cdots \de_k}$
		defines a non-quasianalytic weight sequence, by \eqref{eq:decreasing} and \eqref{eq:nqthm}.
		(Note that $\nu_k = \frac{\mu_k}{\de_k} \to \infty$ is equivalent to $N_k^{1/k} \to \infty$.)
		It satisfies $N \lhd M$ by \eqref{eq:divergent}.
		By \eqref{eq:zerosequence}, there is a constant $C>0$ such that 
		$\de_k \big( \frac{L_k}{M_k} \big)^{1/k} \le C$ for all $k$.
		By the monotonicity of $\de$, this leads to 
		$L_k \le C^k \frac{M_k}{\de_k^k} \le   C^k \frac{M_k}{\de_1 \cdots\de_k} = C^k N_k$.
		After replacing $(N_k)$ by $(C^k N_k)$ we have
		$L \le N \lhd M$.
	\end{proof}

	\begin{theorem}[Non-quasianalytic {$\tensor[_d]{\cE}{^{(M)}}$}]
		\label{thm:sequenceJoris}
		Let $M$ be a non-quasianalytic weight sequence of moderate growth such that $m$ is log-convex.
		Then $\tensor[_d]{\cE}{^{(M)}}$ has property $(\sD)$.
	\end{theorem}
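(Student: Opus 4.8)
The plan is to reduce the Beurling statement to the already-available multidimensional Roumieu result \Cref{thm:ThilliezJoris} by trapping the given germs inside a single, smaller, well-behaved Roumieu class. First I would fix germs $f^j,f^{j+1}\in\tensor[_d]{\cE}{^{(M)}}$ together with representatives on a fixed compact cuboid $V$ containing $0$ in its interior, and record the joint derivative growth on $V$ by setting $B_0:=1$ and $B_k:=\max_{|\al|=k}\sup_V\big(|\p^\al f^j|+|\p^\al f^{j+1}|\big)$ for $k\ge 1$ (replacing any vanishing $B_k$ by a suitably small positive number so that $B$ is a positive sequence). The Beurling hypothesis says precisely that for every $\si>0$ there is $C_\si$ with $B_k\le C_\si\si^k M_k$, hence $\limsup_k (B_k/M_k)^{1/k}\le\si$ for every $\si>0$, i.e.\ $B\lhd M$.

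Next I would invoke \Cref{lem:redDC} with $L=B$. Since $m$ is log-convex with $m_k^{1/k}\to\infty$ and $B\lhd M$, the lemma produces a weight sequence $S$ with $s$ log-convex, $s_k^{1/k}\to\infty$, and $B\le S\lhd M$; moreover, because $M$ has moderate growth and is non-quasianalytic, supplements (i) and (iii) let me take $S$ to have moderate growth and to be non-quasianalytic as well. This $S$ meets all hypotheses of \Cref{thm:ThilliezJoris}: moderate growth forces derivation closedness via \eqref{eq:dc}, and log-convexity of $s$ gives $\ol\Ga_s=\ul\Ga_s$, so the defining inequality $\ol\Ga_s(Ct)\le\ul\Ga_s(t)$ holds (with $C=1$) and $S$ is a regular weight sequence.

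Then I would observe that $B\le S$ yields $\sup_V|\p^\al f^j|\le B_{|\al|}\le S_{|\al|}$, and likewise for $f^{j+1}$, so that $\|f^j\|^S_{1,V},\|f^{j+1}\|^S_{1,V}\le 1$ and hence $f^j,f^{j+1}\in\cB^S_1(V)$; passing to germs gives $f^j,f^{j+1}\in\tensor[_d]{\cE}{^{\{S\}}}$. Applying \Cref{thm:ThilliezJoris} to the non-quasianalytic regular weight sequence $S$ of moderate growth, the ring $\tensor[_d]{\cE}{^{\{S\}}}$ has property $(\sD)$, so $f\in\tensor[_d]{\cE}{^{\{S\}}}$. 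Finally, since $S\lhd M$, the inclusion $\cE^{\{S\}}(U)\subseteq\cE^{(M)}(U)$ recorded in \Cref{sec:weightsequences} passes to germs, giving $f\in\tensor[_d]{\cE}{^{(M)}}$, which is the desired property $(\sD)$.

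The hard part is the middle step: one must manufacture a \emph{single} intermediate weight sequence $S$ that simultaneously dominates the combined derivative growth of both $f^j$ and $f^{j+1}$ on the fixed neighborhood \emph{and} inherits moderate growth, non-quasianalyticity, and—through log-convexity of $s$—regularity, so that the multidimensional Roumieu theorem genuinely applies. Everything else reduces to the sandwiching $\cE^{\{S\}}\subseteq\cE^{(M)}$ together with the bookkeeping of the derivative bounds; the substantive content is exactly what \Cref{lem:redDC} is built to supply.
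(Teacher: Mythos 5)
Your proposal is correct and follows essentially the same route as the paper: record the joint derivative growth of $f^j,f^{j+1}$ in a sequence $L\lhd M$, apply \Cref{lem:redDC} to sandwich a non-quasianalytic, moderate-growth, regular weight sequence $S$ with $L\le S\lhd M$, invoke \Cref{thm:ThilliezJoris} for $S$, and conclude via $\cE^{\{S\}}\subseteq\cE^{(M)}$. The only difference is that you spell out explicitly why $S$ is regular (derivation closedness from moderate growth, $\ol\Ga_s=\ul\Ga_s$ from log-convexity of $s$), which the paper leaves implicit in the phrase ``satisfying the assumptions of \Cref{thm:ThilliezJoris}.''
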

	
	\begin{proof}
		Suppose that $g := f^j, h:= f^{j+1} \in \tensor[_d]{\cE}{^{(M)}}$ for some positive integer $j$. 
		Assume that representatives of these germs are defined in the neighborhood of the closure of 
		some bounded $0$-neighborhood $U$; 
		we denote the representatives by the same symbols. Then the sequence
		\begin{equation} \label{eq:defL}
			L_k:= \max\Big\{ \sup_{|\al|= k, \,x\in U} |g^{(\al)}(x)|, 
			\sup_{|\al|= k, \,x\in U} |h^{(\al)}(x)| \Big\}
		\end{equation}
		satisfies $L \lhd M$. 
		By \Cref{lem:redDC}, there exists a weight sequence $S$ satisfying the assumptions of 
		\Cref{thm:ThilliezJoris} and $L \le S \lhd M$.
		Thus, $f \in \tensor[_d]{\cE}{^{\{S\}}} \subseteq \tensor[_d]{\cE}{^{(M)}}$.
	\end{proof}

	\begin{theorem}[Quasianalytic {$\tensor[_1]{\cE}{^{(M)}}$}]
		\label{thm:ThilliezJorisBQ}
		Let $M$ be a quasianalytic derivation closed weight sequence such that $m$ is log-convex and
		$m_k^{1/k} \to \infty$.
		Then $\tensor[_1]{\cE}{^{(M)}}$ has property $(\sD)$.
	\end{theorem}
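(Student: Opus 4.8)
The plan is to reduce the quasianalytic Beurling statement to the quasianalytic Roumieu statement of \Cref{thm:ThilliezJorisQ}, mirroring the reduction used in the proof of \Cref{thm:sequenceJoris}. Suppose $g := f^j, h := f^{j+1} \in \tensor[_1]{\cE}{^{(M)}}$ for some positive integer $j$. As in the proof of \Cref{thm:sequenceJoris}, I fix a bounded $0$-neighborhood $U$ on whose closure representatives of $g$ and $h$ (denoted by the same symbols) are defined, and I form the sequence $L$ as in \eqref{eq:defL}. Because $g,h \in \tensor[_1]{\cE}{^{(M)}}$, the Beurling condition gives uniform bounds with respect to \emph{every} $\si>0$, which translates precisely into $L \lhd M$.

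Next I would invoke \Cref{lem:redDC} applied to $L \lhd M$. The hypotheses of \Cref{thm:ThilliezJorisBQ} are exactly that $M$ is a quasianalytic derivation closed weight sequence with $m$ log-convex and $m_k^{1/k}\to\infty$; these feed into \Cref{lem:redDC} (the log-convexity of $m$ and $m_k^{1/k}\to\infty$ are assumed there) and produce a weight sequence $S$ with $s$ log-convex, $s_k^{1/k}\to\infty$, and $L \le S \lhd M$. Invoking supplement (ii) I retain derivation closedness of $S$ from that of $M$. The delicate point is quasianalyticity: unlike the non-quasianalytic case of \Cref{thm:sequenceJoris}, which used supplement (iii), here I must ensure that $S$ can be taken \emph{quasianalytic}, so that \Cref{thm:ThilliezJorisQ} is applicable to $S$.

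The main obstacle is therefore this quasianalyticity transfer. Supplement (iii) of \Cref{lem:redDC} goes the wrong way---it arranges non-quasianalyticity of $S$---so I cannot simply quote it. Instead I would argue directly: since $L \le S \lhd M$ and $M$ is quasianalytic, I want $\sum_k 1/\si_k = \infty$, where $\si_k = S_k/S_{k-1}$. The point is that $S \lhd M$ forces $S$ to lie asymptotically below $M$, and (with $s$ log-convex, hence $S$ log-convex) a sequence squeezed beneath a quasianalytic sequence should itself be quasianalytic; concretely, because $M_k^{1/k}\to\infty$ controls the growth rate and $S \lhd M$ means $S$ grows no faster, the sum $\sum 1/\si_k$ cannot converge. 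I expect that a clean way to secure this is to observe that the construction in \Cref{lem:redDC} produces $S$ as a suitable ``interpolant'' that inherits quasianalyticity from $M$ automatically when one does \emph{not} apply the non-quasianalytic modification; alternatively, one runs the \cite{ChaumatChollet94} selection so as to preserve, rather than destroy, the divergence of $\sum 1/\mu_k$.

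Once $S$ is secured as a quasianalytic derivation closed weight sequence with $s$ log-convex, the conclusion is immediate: by \Cref{thm:ThilliezJorisQ}, $\tensor[_1]{\cE}{^{\{S\}}}$ has property $(\sD)$, so from $g = f^j, h = f^{j+1} \in \tensor[_1]{\cE}{^{\{S\}}}$ (which holds since $L \le S$) we conclude $f \in \tensor[_1]{\cE}{^{\{S\}}}$. Finally $S \lhd M$ gives $\tensor[_1]{\cE}{^{\{S\}}} \subseteq \tensor[_1]{\cE}{^{(M)}}$, whence $f \in \tensor[_1]{\cE}{^{(M)}}$, as desired.
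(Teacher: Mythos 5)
Your reduction is exactly the paper's own (alternative) argument: the paper proves \Cref{thm:ThilliezJorisBQ} either by observing that Thilliez's proof of \Cref{thm:ThilliezJorisQ} in \cite{Thilliez10} goes through in the Beurling case as well, or---as you propose---by reducing to \Cref{thm:ThilliezJorisQ} via \Cref{lem:redDC}, in the same way that \Cref{thm:sequenceJoris} is deduced from \Cref{thm:ThilliezJoris}. So your overall structure is sound and matches the intended proof.

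The one step you flag as the ``main obstacle''---quasianalyticity of $S$---is in fact automatic, and no modification of \Cref{lem:redDC} or of the \cite{ChaumatChollet94} selection is needed: quasianalyticity is inherited by \emph{any} weight sequence lying below $M$. Indeed, $S \lhd M$ gives a constant $C \ge 1$ with $S_k \le C^k M_k$ for all $k$, hence $S_k^{1/k} \le C\, M_k^{1/k}$ and therefore $\sum_k S_k^{-1/k} \ge C^{-1} \sum_k M_k^{-1/k} = \infty$, where the divergence of $\sum_k M_k^{-1/k}$ is precisely quasianalyticity of the log-convex sequence $M$ in the form \eqref{eq:Mnq}; applying the equivalence \eqref{eq:Mnq} to the weight sequence $S$ then yields $\sum_k 1/\si_k = \infty$, i.e., $S$ is quasianalytic. (More conceptually: $S \lhd M$ gives $\cE^{\{S\}} \subseteq \cE^{(M)} \subseteq \cE^{\{M\}}$, so a nontrivial compactly supported function in $\cE^{\{S\}}$ would contradict quasianalyticity of $\cE^{\{M\}}$ by the Denjoy--Carleman theorem.) With this observation your argument closes: $S$ is a quasianalytic derivation closed weight sequence with $s$ log-convex, so \Cref{thm:ThilliezJorisQ} applies to $S$, and $S \lhd M$ gives $\tensor[_1]{\cE}{^{\{S\}}} \subseteq \tensor[_1]{\cE}{^{(M)}}$ as you state.
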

	
	\begin{proof}
		This follows from the proof of \Cref{thm:ThilliezJorisQ} in \cite{Thilliez10} (which also works in the Beurling case). 
		Alternatively, we may infer it from \Cref{thm:ThilliezJorisQ} by a reduction argument based on \Cref{lem:redDC} as 
		in the proof of \Cref{thm:sequenceJoris}.
	\end{proof}

	\subsection{A multidimensional quasianalytic result}	

	Let $M$ be a weight sequence and consider the sequence space
	\[
		\La^{\{M\}} := \Big\{ (c_k) \in \C^\N : \E \rh >0 : \sup_{k \in \N} \frac{|c_k|}{\rh^k M_k} < \infty\Big\}.
	\]
	We call a quasianalytic weight sequence $M$ \emph{intersectable} if
	\begin{equation} \label{eq:intersectable}
		\La^{\{M\}} = \bigcap_{N \in \cL(M)} \La^{\{N\}},
	\end{equation}
	where $\cL(M)$ is the collection of all non-quasianalytic weight sequences $N \ge M$ such that $n$ is log-convex.
	The identity \eqref{eq:intersectable} carries over to respective function spaces, since
	$\cB^{\{M\}}(U) = \big\{f \in C^\infty(U) :  (\sup_{x \in U}\|f^{(k)}(x)\|_{L^k_{\on{sym}}}) \in \La^{\{M\}}\big\}$, 
	where $f^{(k)}$ denotes the $k$-th order Fr\'echet derivative and $\|\cdot\|_{L^k_{\on{sym}}}$ the operator norm. 

	Note that a quasianalytic intersectable weight sequence $M$ always satisfies $m_k^{1/k} \to \infty$; an argument is 
	given in \Cref{rem:intersectable} below.

	\begin{theorem}[Quasianalytic {$\tensor[_d]{\cE}{^{\{M\}}}$}]
	\label{thm:quasiJoris}
	Let $M$ be a quasianalytic intersectable weight sequence of moderate growth.
	Then $\tensor[_d]{\cE}{^{\{M\}}}$ has property $(\sD)$.
	\end{theorem}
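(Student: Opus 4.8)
The plan is to derive the quasianalytic statement from the non-quasianalytic multidimensional result \Cref{thm:ThilliezJoris} by exploiting the assumption that $M$ is intersectable. By the discussion following \eqref{eq:D} it suffices to treat two consecutive powers, so I would fix a positive integer $j$, set $g:=f^j$ and $h:=f^{j+1}$, and assume $g,h\in\tensor[_d]{\cE}{^{\{M\}}}$. Since the identity \eqref{eq:intersectable} carries over to the function spaces $\cB^{\{\cdot\}}$ (as noted in the excerpt just before the statement), it also passes to germs, giving $\tensor[_d]{\cE}{^{\{M\}}}=\bigcap_{N\in\cL(M)}\tensor[_d]{\cE}{^{\{N\}}}$. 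Thus it is enough to show $f\in\tensor[_d]{\cE}{^{\{N\}}}$ for every $N\in\cL(M)$.

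Next I would introduce the family $\cF$ of all non-quasianalytic \emph{regular} weight sequences $S$ of moderate growth with $M\preceq S$, and observe that \Cref{thm:ThilliezJoris} applies to each member of $\cF$. Indeed, for $S\in\cF$ we have $g,h\in\tensor[_d]{\cE}{^{\{M\}}}\subseteq\tensor[_d]{\cE}{^{\{S\}}}$ because $M\preceq S$, and $S$ satisfies exactly the hypotheses of \Cref{thm:ThilliezJoris} (non-quasianalytic, regular, of moderate growth). Hence $f\in\tensor[_d]{\cE}{^{\{S\}}}$ for every $S\in\cF$, so that $f\in\bigcap_{S\in\cF}\tensor[_d]{\cE}{^{\{S\}}}$. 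This step uses the hypotheses only through the fixed class $\cE^{\{M\}}$ and needs no information about $f$ itself beyond $g,h\in\cE^{\{M\}}$.

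To finish, by the previous paragraph and intersectability it remains to prove the cofinality statement: for each $N\in\cL(M)$ there exists $S\in\cF$ with $S\preceq N$, i.e.\ $M\preceq S\preceq N$. Granting this, every $\phi\in\bigcap_{S\in\cF}\tensor[_d]{\cE}{^{\{S\}}}$ lies in $\tensor[_d]{\cE}{^{\{N\}}}$ for all $N\in\cL(M)$ (via $\cE^{\{S\}}\subseteq\cE^{\{N\}}$), hence in $\tensor[_d]{\cE}{^{\{M\}}}$; applying this to $\phi=f$ completes the proof.

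The cofinality statement is the main obstacle and the only place where moderate growth of $M$ is genuinely used. The tension is that the moderate growth of $S$ must be inherited from $M$—whose moderate growth forces $\mu$ to grow at most polynomially—while the non-quasianalyticity of $S$ and the upper bound $S\preceq N$ must be borrowed from $N$, whose non-quasianalyticity forces $\nu_k/k\to\infty$ and thereby creates room above $M$. Concretely I would try to build the defining sequence $\sigma$ of $S$ so that $\mu_k\lesssim\sigma_k\lesssim\nu_k$, with $\sigma_k$ of polynomial growth (moderate growth of $S$), with $\sigma_k/k$ increasing (so that $s$ is log-convex and $S$ is therefore regular), and with $\sum_k 1/\sigma_k<\infty$ (non-quasianalyticity). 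The genuinely delicate case is $M\not\lhd N$: then $M$ and $N$ coincide up to a geometric factor along some subsequence, which squeezes $S$ down to $M$ there and forces the non-quasianalyticity of $S$ to be extracted entirely from the sparse indices where $N$ strictly dominates $M$, all while keeping moderate growth and $S\preceq N$ intact. Carrying this out requires a careful sequence construction in the spirit of \Cref{lem:redDC} and \cite{ChaumatChollet94}; I expect this interpolation to be the crux of the argument.
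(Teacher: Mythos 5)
Your opening reduction (use intersectability to reduce to showing $f\in\tensor[_d]{\cE}{^{\{N\}}}$ for every $N\in\cL(M)$) coincides with the paper's first step, but from there the routes diverge, and your route has a genuine gap: the ``cofinality statement'' that you defer is exactly the hard part, and it is neither proved by you nor anywhere in the paper. The paper never interpolates a single non-quasianalytic, \emph{regular, moderate growth} sequence $S$ with $M\preceq S\preceq N$. Instead, \Cref{lem:intersectable}(ii) produces, by the explicit formula $n'_k=C^k\min_{0\le j\le k}n_jn_{k-j}$, a finite chain $N^{(1)},\dots,N^{(k)}=N$ lying entirely inside $\cL(M)$ and satisfying only the \emph{mixed} hypotheses of \Cref{lem:corelemma} (the conditions $\on{mg}(N^{(i)},N^{(i+1)})<\infty$ together with a $\ol\Ga/\ul\Ga$-comparison between the first two members); the multidimensional conclusion then comes from composing with $\cE^{\{N^{(1)}\}}$-curves, applying \Cref{lem:corelemma} in one variable, and invoking the curve test of \cite[Theorem 2.7]{KMRq} --- not from \Cref{thm:ThilliezJoris}. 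This design is deliberate: no single sequence in the chain is required to have moderate growth. By contrast, your interpolating $S$ must simultaneously be log-convex with $s$ satisfying the $\Ga$-condition (regularity), of moderate growth (hence Gevrey-bounded), non-quasianalytic, and squeezed between $M$ and an \emph{arbitrary} $N\in\cL(M)$. The natural constructions fail in instructive ways: pointwise minima such as $\min(N_k, C^k k!^s)$, or inf-convolution-type regularizations of $k\mapsto\log N_k^{1/k}$ with a polynomial cap, destroy log-convexity of $s$; passing afterwards to the log-convex minorant can destroy non-quasianalyticity (which, by Denjoy--Carleman, is a property of the regularized sequence). A correct proof along your lines would need a careful smoothing/interpolation argument, which is precisely what you acknowledge omitting; so the proposal is an outline whose central lemma is missing, whereas the corresponding step in the paper, \Cref{lem:intersectable}(ii), is a short explicit computation.

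There is a second, smaller gap. The passage from the function-space identity \eqref{eq:intersectable} to the germ identity $\tensor[_d]{\cE}{^{\{M\}}}=\bigcap_{N\in\cL(M)}\tensor[_d]{\cE}{^{\{N\}}}$, and likewise your black-box use of the germ statement \Cref{thm:ThilliezJoris} for all $S\in\cF$, both require a \emph{common} neighborhood: inductive limits (germs) do not commute with uncountable intersections, and \Cref{thm:ThilliezJoris} only guarantees $f\in\cE^{\{S\}}(U_S)$ on a neighborhood $U_S$ which a priori shrinks with $S$. The paper sidesteps this by running the curve test on the fixed neighborhood $U$ carrying representatives of $f^j$, $f^{j+1}$, obtaining $f\in\cE^{\{N\}}(U)$ for every $N\in\cL(M)$ on the \emph{same} $U$, and only then intersecting. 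Your argument could be repaired on this point by appealing to the proof (rather than the statement) of \Cref{thm:ThilliezJoris}, but as written the neighborhood uniformity is unaddressed.
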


The proof of this result is given in \Cref{sec:proof}.

	\begin{remark} \label{rem:intersectable}
	In \cite[Theorem 1.6]{KMRq} (inspired by \cite{Boman63}) a sufficient condition for intersectability was given. 
	Let $M$ be a quasianalytic weight sequence with $1 \le M_0<M_1$.
	Consider the sequence $\check M$ defined by 	
	\[
	\check{M}_k:= M_k \prod_{j=1}^k \Big(1 - \frac{1}{M_j^{1/j}}\Big)^k, \quad \check M_0 := 1.
	\]
	If $\check m$ is log-convex, then $M$ is intersectable.

	Not every quasianalytic weight sequence is intersectable, for instance,
	\[
		\La^{\{(k!)\}} \ne \bigcap_{N \in \cL((k!))} \La^{\{N\}} = \La^{\{Q\}}, \quad \text{ where } Q_k = (k\log(k+e))^k); 
	\] 	
	see \cite[Theorem 1.8]{KMRq} and \cite{Rudin62}.
	Every quasianalytic intersectable weight sequence $M$ must satisfy $\La^{\{Q\}} \subseteq \La^{\{M\}}$, 
	and so $m_k^{1/k}$ tends to $\infty$ since clearly $q_k^{1/k}$ does.

	A countable family $\mathbf Q = \{Q^n\}_{n\in \N_{\ge 1}}$ of quasianalytic intersectable weight sequences of moderate growth was constructed in \cite[Theorem 1.9]{KMRq}:
	\[
		Q^n_k = \big(k \log(k) \log(\log(k)) \cdots \log^{[n]}(k)\big)^k, \quad \text{ for } k \ge \exp^{[n]}(1), 
	\]
	where $\log^{[n]}$ denotes the $n$-fold composition of $\log$; analogously for $\exp^{[n]}$.
	
	See also \cite[Section 11]{Schindl14} for a generalization of this concept.
	\end{remark}

	\section{Braun--Meise--Taylor classes have property \texorpdfstring{$(\sD)$}{(D)}}
	\label{sec:functions}

	\subsection{Weight functions and Braun--Meise--Taylor classes}
	A \emph{weight function} is, by definition, a continuous increasing function $\om:[0,\infty) \rightarrow [0,\infty)$
	such that
	\begin{itemize}
		\item[$(\om_1)$] $\om(2t) = O (\om(t))$ as $t \rightarrow \infty$,
		\item[$(\om_2)$] $\om(t) = o(t)$  as $t \rightarrow \infty$,
		\item[$(\om_3)$] $\log(t) = o(\om(t))$  as $t \rightarrow \infty$,
		\item[$(\om_4)$] $t \mapsto \om(e^t)=:\vh_\om(t)$ is convex on $[0,\infty)$.
	\end{itemize}
	One may assume that $\om|_{[0,1]} \equiv 0$ (without changing the associated classes $\cE^{[\om]}$) which we shall tacitly do if convenient.

	Let $U \subseteq \R^d$ be open and $\rho >0$. We associate the Banach space
	\[
	\cB^{\om}_\rho(U):= \Big\{f \in C^\infty(U): 
	\|f\|^\om_{\rho,U}:= \sup_{x \in U,\, \al \in \N^d} \frac{|\partial^\al f (x)|}{e^{\ph_\om^* (\rho|\al|)/\rho}}<\infty \Big\},
	\]
	where $\ph_\om^*(s):= \sup_{t \ge 0} \{st - \ph_\om(t)\}$ is the Young conjugate of $\vh_\om$ (which is finite by $(\om_3)$).
	Then the (local) \emph{Braun--Meise--Taylor class of Roumieu type} is
	\[
	\cE^{\{\om\}}(U):= \on{proj}_{V \Subset U} \on{ind}_{n \in \N} \cB^\om_n(V),
	\]
	and that of \emph{Beurling type} is
	\[
	\cE^{(\om)}(U):= \on{proj}_{V \Subset U} \on{proj}_{n \in \N} \cB^\om_{\frac{1}{n}}(V).
	\]
	Again we use $\cE^{[\om]}$ for $\cE^{\{\om\}}$ and $\cE^{(\om)}$, similarly for $\cB^{[\om]}$ etc.

	For two weight functions $\om$, $\si$ we have (cf. \cite[Corollary 5.17]{RainerSchindl14})
	\begin{align*}
	\si(t) = O(\om(t)) \text{ as }  t\to \infty  \quad &\Leftrightarrow \quad \cE^{[\om]}(U) \subseteq \cE^{[\si]}(U),\\
	\si(t) = o(\om(t)) \text{ as }  t\to \infty \quad &\Leftrightarrow \quad \cE^{\{\om\}}(U) \subseteq \cE^{(\si)}(U).
	\end{align*}
	We say that $\om$ and $\si$ are \emph{equivalent} if they generate the same classes, i.e.,
	$\si(t) = O(\om(t))$ and $\om(t) = O(\si(t))$ as $t \to \infty$.   
	
	A weight function is said to be \emph{non-quasianalytic} if
	\begin{equation}
		\label{eq:nq}
		\int_1^\infty \frac{\om(t)}{t^2}\,dt<\infty.
	\end{equation}
	This is the case if and only if $\cE^{[\om]}(U)$ contains non-trivial functions of compact support (cf.\ \cite{BMT90} or \cite{RainerSchindl12}).

	Let us emphasize that in this paper we treat condition $(\om_2)$ as a general assumption for weight functions; it means that
	the Beurling class $\cE^{(\om)}$ contains the real analytic class. It is automatically satisfied if $\om$ is non-quasianalytic.

	Let $\tensor[_d]{\cE}{^{[\om]}}$ denote the \emph{ring of germs} at $0 \in \R^d$ of complex valued $\cE^{[\om]}$-functions;
	note that $\cE^{[\om]}$ is stable by multiplication of functions for any weight function $\om$.

	\subsection{The associated weight matrix} \label{sec:associatedmatrix}

	Let $\om$ be a weight function.
	Setting $\Om^x_k:= e^{\ph_\om^* (xk)/x}$ defines a weight sequence $\Om^x$ for every $x>0$,
	where $\Om^x \le \Om^y$ if $x \le y$.
	Thus the collection $\mathbf{\Omega}:=\{\Om^x\}_{x>0}$ is a weight matrix (in the sense of \Cref{sec:matrix}).
	Note that $\mathbf{\Omega}$ satisfies a \emph{mixed} moderate growth property, namely
	\begin{equation}
	\label{eq:fctmod}
	\A x>0 \A j,k \in \N:  \Om^x_{j+k} \le \Om^{2x}_j \Om^{2x}_k.
	\end{equation}
	The importance of the associated weight matrix $\mathbf{\Om}$ is that it encodes an equivalent topological description
	of the spaces $\cE^{[\om]}(U)$ as unions or intersections of Denjoy--Carleman classes; see \Cref{thm:BMTmatrix}.
	All this can be found in \cite{RainerSchindl12}.

	\subsection{Braun--Meise--Taylor classes have property \texorpdfstring{$(\sD)$}{(D)}}

	\begin{theorem}[{$\tensor[_1]{\cE}{^{[\om]}}$}]
		\label{thm:weightfunction}
		Let $\om$ be a concave weight function. 
		Then $\tensor[_1]{\cE}{^{[\om]}}$ has property $(\sD)$. 
	\end{theorem}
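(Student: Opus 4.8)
The plan is to reduce \Cref{thm:weightfunction} to the weight-matrix version of Joris's theorem in one dimension, \Cref{thm:matrix1dimJoris}, by passing to the weight matrix $\mathbf{\Om}=\{\Om^x\}_{x>0}$ associated to $\om$. By the matrix description of Braun--Meise--Taylor classes (\Cref{thm:BMTmatrix}, i.e.\ \cite{RainerSchindl12}) the classes coincide, $\cE^{[\om]}=\cE^{[\mathbf{\Om}]}$, so the rings of germs agree, $\tensor[_1]{\cE}{^{[\om]}}=\tensor[_1]{\cE}{^{[\mathbf{\Om}]}}$, for both the Roumieu and the Beurling interpretation of $[\cdot]$. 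Hence it suffices to check that $\mathbf{\Om}$ satisfies the two hypotheses --- [moderate growth] and [regular] --- under which \Cref{thm:matrix1dimJoris} grants property $(\sD)$; the ring structure is automatic, since $\cE^{[\om]}$ is stable under multiplication for every weight function.

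Moderate growth of the matrix comes essentially for free. The mixed moderate growth property \eqref{eq:fctmod}, $\Om^x_{j+k}\le \Om^{2x}_j\Om^{2x}_k$, holds for any weight function and already encodes both directions of the matrix moderate growth condition: read directly it bounds $\Om^x$ from above in the Roumieu direction (take $y=2x\ge x$), while applied at the halved index it gives $\Om^{x/2}_{j+k}\le \Om^x_j\Om^x_k$, which is the Beurling direction (take $y=x/2\le x$). In particular it yields the mixed derivation closedness that is part of the definition of [regular].

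The heart of the matter --- and the step I expect to be the main obstacle --- is to verify that $\mathbf{\Om}$ is [regular], and this is exactly where concavity of $\om$ enters. First, the standing assumption $(\om_2)$ guarantees that $\cE^{(\om)}$ contains the real analytic class, which translates into $(\Om^x_k/k!)^{1/k}\to\infty$ for each $x$; thus each associated sequence $\om^x:=(\Om^x_k/k!)_k$ satisfies $(\om^x_k)^{1/k}\to\infty$. The decisive remaining ingredient is the counting-function inequality $\ol\Ga_{\om^x}(Ct)\le \ul\Ga_{\om^y}(t)$, and I would obtain it by showing that concavity of $\om$ forces each $\om^x$ to be \emph{log-convex}. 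This is the matrix counterpart of the classical fact that $\cE^{[\om]}$ is closed under composition precisely when $\om$ is (equivalent to) a concave weight function --- the Denjoy--Carleman analogue being log-convexity of $m=M/k!$. Concretely, one analyzes the log-ratios $\log(\om^x_{k+1}/\om^x_k)=\big(\ph_\om^*(x(k+1))-\ph_\om^*(xk)\big)/x-\log(k+1)$ and shows, using the convexity of $\ph_\om^*$ sharpened by concavity of $\om$, that the second differences of $\ph_\om^*(xk)/x$ dominate those of $\log(k+1)$, so that $\log(\om^x_{k+1}/\om^x_k)$ is nondecreasing in $k$. Once each $\om^x$ is log-convex, $\ol\Ga_{\om^x}=\ul\Ga_{\om^x}$, and the mixed inequality $\ol\Ga_{\om^x}(Ct)\le \ul\Ga_{\om^y}(t)$ follows by combining this identity with the monotonicity of the counting functions and the comparison between $\om^x$ and $\om^y$ furnished by \eqref{eq:fctmod}.

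With both hypotheses of \Cref{thm:matrix1dimJoris} verified for $\mathbf{\Om}$, that theorem yields property $(\sD)$ for $\tensor[_1]{\cE}{^{[\mathbf{\Om}]}}=\tensor[_1]{\cE}{^{[\om]}}$, completing the argument. The only genuinely nontrivial point is the passage from concavity of $\om$ to the regularity (log-convexity of the $\om^x$) of the associated matrix; everything else is bookkeeping with the matrix conditions and the already-established equivalence of the function-space descriptions.
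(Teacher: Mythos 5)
Your overall route --- pass to the associated weight matrix $\mathbf{\Om}$ via $\cE^{[\om]}=\cE^{[\mathbf{\Om}]}$ and invoke \Cref{thm:matrix1dimJoris}, with \eqref{eq:fctmod} supplying [moderate growth] --- is indeed the paper's strategy, and that part of your bookkeeping is correct. The genuine gap is exactly the step you single out as the heart of the matter: the claim that concavity of $\om$ forces each $\om^x_k:=\Om^x_k/k!$ to be log-convex is \emph{false}; in fact it fails for \emph{every} genuinely concave weight function once $x$ is small. Since $\om$ is concave, increasing, and unbounded, its right derivative at $1$ satisfies $c:=D^+\om(1)>0$; since $\ph_\om$ is convex with $D^+\ph_\om(0)=c$, we have $\ph_\om(t)\ge\ph_\om(0)+ct$ for $t\ge0$, hence $\ph_\om^*(u)=-\ph_\om(0)$ is \emph{constant} for $0\le u\le c$. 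Thus for $2x\le c$ one gets $\Om^x_0=\Om^x_1=\Om^x_2$, and therefore
\[
\om^x_0\,\om^x_2=\tfrac12\,\Om^x_0\,\Om^x_2=\tfrac12\,(\Om^x_1)^2=\tfrac12\,(\om^x_1)^2<(\om^x_1)^2,
\]
so log-convexity already fails at $k=1$. (Renormalizing so that $\om|_{[0,1]}\equiv0$ does not rescue this: that normalization is incompatible with genuine concavity, and in any case it only rescales each $\Om^x$ by a constant, which leaves log-convexity unchanged.) Your supporting analogy is also inaccurate: closedness under composition of $\cE^{[\om]}$ holds for \emph{every} weight function \cite{RainerSchindl12}, so it cannot characterize concavity; what concavity characterizes is the almost analytic extension property, see \cite[Theorem 4.8]{FurdosNenningRainer}.

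The repair is the paper's actual argument, recorded in \Cref{thm:BMTmatrix}: one does not prove that $\mathbf{\Om}$ itself is [regular], but replaces it by an \emph{equivalent} matrix. By \cite[Proposition 3]{Rainer:2020aa}, concavity of $\om$ yields a weight matrix $\fS=\{S^x\}_{x>0}$ with $\cE^{[\om]}=\cE^{[\fS]}$ such that each $s^x$ \emph{is} log-convex (the $s^x$ arise as associated sequences of the form $s^x_k=\sup_{t>0}t^k e^{-\om(t)/x}$, whose logarithms are suprema of affine functions of $k$, so log-convexity is automatic) and $\on{mg}(s^x,s^{2x})<\infty$. This matrix $\fS$ is [regular] and of [moderate growth], so \Cref{thm:matrix1dimJoris} applies to $\fS$ and gives property $(\sD)$ for $\tensor[_1]{\cE}{^{[\fS]}}=\tensor[_1]{\cE}{^{[\om]}}$. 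As written, your proof cannot be completed, because the lemma it hinges on is not true.
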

	
	Evidently, it suffices to assume that $\om$ is equivalent to a concave weight function.

	For the multidimensional analogue we additionally assume non-quasianalyticity.
	
	\begin{theorem}[Non-quasianalytic {$\tensor[_d]{\cE}{^{[\om]}}$}]
	\label{thm:functionJoris}
		Let $\om$ be a non-quasianalytic concave weight function. 
		Then $\tensor[_d]{\cE}{^{[\om]}}$ has property $(\sD)$.
	\end{theorem}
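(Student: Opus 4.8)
The plan is to reduce the $d$-dimensional assertion to the one-dimensional \Cref{thm:weightfunction}, handling the Roumieu and the Beurling case by different routes. In the non-quasianalytic Roumieu setting membership in $\cE^{\{\om\}}$ can be tested by composition with curves (a Boman-type theorem, \cite{KMRc,Schindl14}), and this is the device that lowers the dimension; in the Beurling case I would instead pass to an auxiliary Roumieu class and then quote the Roumieu result. Throughout I use that $\cE^{[\om]}$ is stable under composition, which is a standard feature of Braun--Meise--Taylor classes.

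For the Roumieu case, suppose $g:=f^j$ and $h:=f^{j+1}$ are germs in $\tensor[_d]{\cE}{^{\{\om\}}}$, with representatives on a neighborhood of $0$. Since $g,h\in C^\infty$ and $j,j+1$ are relatively prime, Boman's theorem \cite{Boman67} together with Joris's theorem \cite{Joris82} already gives $f\in C^\infty$, so that the curve-testing characterization applies to $f$. Now let $c$ be an arbitrary $\cE^{\{\om\}}$-curve with $c(0)=0$. By composition stability, $(f\circ c)^j=g\circ c$ and $(f\circ c)^{j+1}=h\circ c$ lie in $\tensor[_1]{\cE}{^{\{\om\}}}$, whence the one-dimensional \Cref{thm:weightfunction} yields $f\circ c\in\tensor[_1]{\cE}{^{\{\om\}}}$. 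As $c$ was arbitrary, the curve-testing characterization of the non-quasianalytic Roumieu class \cite{KMRc,Schindl14} (which one may phrase through the associated weight matrix $\mathbf\Omega$, cf.\ \Cref{sec:associatedmatrix}) forces $f\in\tensor[_d]{\cE}{^{\{\om\}}}$.

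For the Beurling case I would run a reduction in the spirit of \Cref{thm:sequenceJoris} and \Cref{lem:redDC}, now at the level of weight functions. Given $g=f^j,\,h=f^{j+1}\in\tensor[_d]{\cE}{^{(\om)}}$, their derivatives on a fixed bounded $0$-neighborhood obey Beurling-type bounds relative to $\om$, i.e.\ bounds valid for every $\rho>0$. Exploiting that $\om$ is non-quasianalytic, I would construct (by a Chaumat--Chollet-type interpolation, cf.\ \cite{ChaumatChollet94}) a concave non-quasianalytic weight function $\si$ with $\om(t)=o(\si(t))$ such that $g,h\in\cE^{\{\si\}}$. Since $\om(t)=o(\si(t))$ is equivalent to $\cE^{\{\si\}}\subseteq\cE^{(\om)}$, applying the Roumieu case just proved to $\si$ gives $f\in\tensor[_d]{\cE}{^{\{\si\}}}\subseteq\tensor[_d]{\cE}{^{(\om)}}$.

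The step demanding the most care in the Roumieu part is the appeal to curve testing: this is exactly where non-quasianalyticity is indispensable, since the Boman-type theorem fails in the quasianalytic regime, and one must verify that the compositions $g\circ c$ and $h\circ c$ truly land in $\tensor[_1]{\cE}{^{\{\om\}}}$. The main obstacle, however, is the Beurling interpolation: one must produce $\si$ that simultaneously grows strictly faster than $\om$ (to secure $\cE^{\{\si\}}\subseteq\cE^{(\om)}$), absorbs the given Beurling bounds as Roumieu bounds, and remains both concave and non-quasianalytic. The convergence of $\int_1^\infty \om(t)/t^2\,dt$ supplies precisely the slack needed to enlarge $\om$ while keeping non-quasianalyticity intact.
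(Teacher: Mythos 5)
Your proposal follows essentially the same route as the paper: in the Roumieu case the paper likewise combines the non-quasianalytic curve-testing theorem of \cite{KMRc,Schindl14} (packaged as \Cref{thm:matrixmultdimJoris} for the associated weight matrix) with the one-dimensional result, and in the Beurling case it likewise reduces to the Roumieu case by constructing a concave non-quasianalytic weight function $\tilde\om$ with $\om(t)=o(\tilde\om(t))$ that absorbs the given Beurling bounds as Roumieu bounds (\Cref{lem:omreduct}, adapted from \cite{Rainer:2020ab}, together with the $L_k$-argument). Your sketched ``Chaumat--Chollet-type interpolation'' is exactly what that lemma supplies, and your preliminary appeal to Joris--Boman smoothness is harmless but unnecessary, since the curve test needs no a priori regularity of $f$.
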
	
	
	\Cref{thm:weightfunction,thm:functionJoris} are corollaries of \Cref{thm:matrix1dimJoris,thm:matrixmultdimJoris} below;
	for the proofs see \Cref{sec:proof}.

	\begin{remark} \label{rem:intersectableOmega}
	Every weight sequence $M$ in the family $\mathbf Q$ mentioned at the end of \Cref{rem:intersectable} satisfies 
	\[
		\liminf_{k \to \infty} \frac{\mu_{a k}}{\mu_k} > 1
	\]
	for some positive integer $a$. Hence there is a quasianalytic weight function $\om_M$ (take, e.g., $\om_M(t) := - \log h_M(1/t)$) 
	such that $\tensor[_d]{\cE}{^{[\om_M]}} = \tensor[_d]{\cE}{^{[M]}}$, by \cite[Theorem 14]{BMM07}.	
	So for all $M \in \mathbf Q$, the quasianalytic ring $\tensor[_d]{\cE}{^{\{\om_M\}}}$ has property $(\sD)$. 
	\end{remark}

	\section{The most general version of the theorem}
	\label{sec:matrix}
	
	Let us formulate the main theorems in the most general setting available.
	The conditions we put on abstract weight matrices are tailored in such a way that weight matrices 
	associated with weight functions are contained as special cases.

	\subsection{Weight matrices and ultradifferentiable classes} \label{def:matrix}
	A \emph{weight matrix} $\fM$ is, by definition, a
	family  of weight sequences which is totally ordered with respect to the pointwise order relation on sequences, i.e.,
	\begin{itemize}
		\item $\fM \subseteq \R^\N$,
		\item each $ M \in \fM$ is a weight sequence in the sense of \Cref{sec:weightsequences},
		\item for all $ M, N \in \fM$ we have $ M \le  N$ or $ M \ge  N$.
	\end{itemize}
	
	Let $U \subseteq \R^d$ be open. Given a weight matrix $\fM$,
	we define global classes 
	\begin{align} \label{def:BRM}
		\cB^{\{\fM\}}(U) &:= \on{ind}_{ M \in \fM} \cB^{\{ M\}}(U),
		\\
	 \label{def:BBM}
		\cB^{(\fM)}(U) &:= \on{proj}_{ M \in \fM} \cB^{( M)}(U).
	\end{align}
	The limits in \eqref{def:BRM} and \eqref{def:BBM} can always be assumed countable,
	as is shown in \cite[Lemma 2.5]{FurdosNenningRainer}. Writing $[\cdot]$ for $\{\cdot\}$ and $(\cdot)$, 
	the local classes are defined by
	\[
	\cE^{[\fM]}(U) := \on{proj}_{V \Subset U} \cB^{[\fM]}(V).
	\]

	Let $\tensor[_d]{\cE}{^{[\fM]}}$ denote the \emph{ring of germs} at $0 \in \R^d$ of complex valued $\cE^{[\fM]}$-functions;
	notice that $\cE^{[\fM]}$ is stable by multiplication of functions, since each $M \in \fM$ is a weight sequence.

	\subsection{Regular weight matrices} \label{def:Rregular}
		A weight matrix $\fM$ satisfying
		\begin{itemize}
			\item $m_k^{1/k} \to \infty$ for all $ M \in \fM$ \label{strictInclusion}
		\end{itemize}
		is called $\{\text{\it regular}\}$ or \emph{R-regular} (for Roumieu)  if
		\begin{itemize}
			\item $\forall   M \in \fM \E  N \in \fM \E C\ge 1 \A j \in \N : M_{j+1} \le C^{j+1} N_j$, \label{R-Derivclosed1}
			\item
			$\forall  M \in \fM \E  N \in \fM \E C\ge 1
			\A t>0 : \ol \Ga_{ n}(Ct) \le \ul \Ga_{ m}(t)$, \label{goodR}
		\end{itemize}
		and $(\text{\it regular})$ or \emph{B-regular} (for Beurling) if
		\begin{itemize}
			\item $\forall  M \in \fM \E  N \in \fM \E C \ge 1 \A j \in \N : N_{j+1} \le C^j M_j$, \label{B-Derivclosed1}
			\item $\forall  M \in \fM \E  N \in \fM \E C \ge 1
			\A t>0 : \ol \Ga_{ m}(Ct) \le \ul \Ga_{ n}(t)$. \label{goodB}
		\end{itemize}
		Moreover, $\fM$ is called \emph{regular} if it is both R- and B-regular.
		By our convention, $[\text{regular}]$ stands for $\{\text{regular}\}$ (i.e.\ R-regular) in the Roumieu case and
		$(\text{regular})$ (i.e.\ B-regular) in the Beurling case.

	\subsection{Almost analytic extensions}
	
	Let $h : (0,\infty) \to (0,1]$ be an increasing continuous function which tends to $0$ as $t \to 0$.
	Let $\rh>0$ and let $U \subseteq \R^d$ be a bounded open set.
	We say that a function $f : U \to \C$ admits an \emph{$(h,\rh)$-almost analytic extension} if
	there is a function $F \in C^1_c(\C^d)$ and a constant $C\ge 1$ such that $F|_U = f$ and
	\[
	|\ol \p F(z)| \le C\, h ( \rh d(z,\ol U)), \quad \text{ for } z \in \C^d,
	\]
	where $\ol \p F (z):= \sum_{j=1}^d\frac{\p F(z)}{\p \ol z_j} d\ol z_j $ and $d(z,\ol U) := \inf_{x \in \ol U} |z-x|$ denotes the distance of $z$ to $\ol U$.
	
	Let us apply this definition to the functions $h_m$ from \eqref{h}, where $m_k = M_k/k!$ and $M$ belongs to a given weight matrix $\fM$.
	Let $f : U \to \C$ be a function.
	\begin{itemize}
			\item $f$ is called \emph{$\{\fM\}$-almost analytically extendable} if it has an
			$(h_{ m},\rh)$-almost analytic extension for some $ M \in \fM$ and some $\rh>0$.
			\item $f$ is called \emph{$(\fM)$-almost analytically extendable}
			if, for all $ M \in \fM$ and all $\rh>0$, there is an $(h_{ m},\rh)$-almost analytic extension of $f$.
	\end{itemize}

	\begin{theorem}[{\cite[Corollaries 3.3, 3.5]{FurdosNenningRainer}}]
		\label{thm:roumieuextension}
		Let $\fM$ be a [regular] weight matrix.
		Let $U \subseteq \R^d$ be open.
		Then $f \in \cE^{[\fM]}(U)$ if and only if
		$f|_{V}$ is $[\fM]$-almost analytically extendable for each quasiconvex domain $V$ relatively compact in $U$.
	\end{theorem}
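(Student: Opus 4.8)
The plan is to prove both implications directly, exploiting the fact that the associated function $h_{m}(t)=\inf_k m_k t^k$ is the precise bridge between derivative bounds and the decay rate of $\ol\p$. For necessity (``$\Rightarrow$'') I would manufacture an almost analytic extension out of a truncated Taylor series; for sufficiency (``$\Leftarrow$'') I would recover the derivative bounds defining $\cE^{[\fM]}$ from the extension via a Cauchy--Pompeiu integral representation.

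\emph{Necessity.} First in one variable: given $f$ with $|f^{(k)}|\le C\si^k M_k$ on a neighbourhood, set
\[
F(x+iy):=\chi(y)\sum_{k<N(y)}\frac{f^{(k)}(x)}{k!}(iy)^k,
\]
where $\chi$ cuts off $|y|$ and $N(y)$ is a truncation order to be chosen. Since $\ol\p=\tfrac12(\p_x+i\p_y)$, the telescoping of the sum kills all but the top term, so away from the support of $\chi'$ one gets $|\ol\p F(x+iy)|\lesssim \frac{|f^{(N(y))}(x)|}{N(y)!}|y|^{N(y)}\le m_{N(y)}(\si|y|)^{N(y)}$. Choosing $N(y)=\ol\Ga_{m}(\si|y|)$ makes the right-hand side equal to $h_{m}(\si|y|)$, which (after adjusting $\si$ to $\rh$ and using $d(z,\ol U)\approx|y|$) is exactly the required $(h_{m},\rh)$-decay. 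The multidimensional case follows by performing this construction in the imaginary directions coordinatewise with a product cutoff. The regularity hypotheses on $\fM$ enter here to absorb two losses: derivation closedness compensates for the derivative appearing in the top Taylor term, and the ``good'' comparison $\ol\Ga_{n}(Ct)\le\ul\Ga_{m}(t)$ (resp.\ its Beurling counterpart) lets one trade the sequence governing $f$ for a neighbouring $N\in\fM$ governing the extension, while respecting the quantifier order.

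\emph{Sufficiency.} Suppose $f$ admits an $(h_{m},\rh)$-almost analytic extension $F$. In one variable the Cauchy--Pompeiu formula expresses
\[
f^{(k)}(x)=\frac{k!}{2\pi i}\oint_{|\ze-x|=r}\frac{F(\ze)}{(\ze-x)^{k+1}}\,d\ze-\frac{k!}{\pi}\iint_{|\ze-x|\le r}\frac{\ol\p F(\ze)}{(\ze-x)^{k+1}}\,dA(\ze).
\]
The boundary term is $\le k!\,\|F\|_\infty\,r^{-k}$, and the area term is estimated by splitting the disk into annuli according to $d(\ze,\ol U)$ and inserting $|\ol\p F|\le C\, h_{m}(\rh\, d(\ze,\ol U))$; since $h_{m}(t)\le m_j t^j$ for every $j$, choosing in each annulus the index $j=\ul\Ga_{m}(\rh\de)$ adapted to the distance $\de\approx d(\ze,\ol U)$ yields, after optimizing $r$, a bound $|f^{(k)}(x)|\le C'\rh'^{-k}M_k$ with a controlled change of $\si$ and of the sequence. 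In several variables one applies the representation iteratively (or uses a multidimensional kernel), and quasiconvexity of $V$ guarantees that these pointwise, essentially local derivative estimates assemble into a genuine $\cB^{[\fM]}$-bound on all of $V$.

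The main obstacle is the simultaneous bookkeeping of the two quantifier regimes and the sharp choice of truncation order. Concretely, one must verify that the conditions of \Cref{def:Rregular} are calibrated to absorb, all at once, the factorial loss from differentiating the top Taylor term, the passage between neighbouring sequences $M,N\in\fM$ demanded by the matrix formalism, and the mismatch between $\ol\Ga_{m}$ (natural when building the extension) and $\ul\Ga_{m}$ (natural when recovering derivatives). Getting every $C$, $\si$, and $\rh$ to land on the correct side of the $\{\cdot\}$/$(\cdot)$ quantifiers---together with the compactly supported cutoff preserving the $h_{m}$-decay and the distance-to-$\ol U$ control available precisely on quasiconvex domains---is where the real work lies; the integral estimates themselves are routine once $N(y)$ is fixed via the counting functions.
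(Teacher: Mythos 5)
A preliminary point: the paper itself contains no proof of this theorem. It is imported verbatim from \cite[Corollaries 3.3, 3.5]{FurdosNenningRainer}, and the only ingredient of that proof which the present paper later uses is \cite[Proposition 3.12]{FurdosNenningRainer}. So the comparison has to be with the cited source, whose strategy (Dynkin-type truncated Taylor extension for necessity, Cauchy--Pompeiu recovery of derivative bounds for sufficiency) is indeed the one you outline; at the level of ideas you have reconstructed the right proof.

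Two steps of your sketch, however, fail as written. First, with the hard truncation $k<N(y)$ the function $F$ is \emph{discontinuous} across the countably many levels of $|y|$ (accumulating at $y=0$) where $N(y)$ jumps: the term added or removed at such a level has modulus $\approx m_{N}(\si|y|)^{N}\approx h_m(\si|y|)>0$. Hence $F\notin C^1_c(\C)$, which the definition of an $(h_m,\rh)$-almost analytic extension explicitly requires, and the distributional $\db F$ then contains singular measures supported on those level sets, not just the pointwise top Taylor term. The standard repair --- Dynkin's, and the one behind \cite[Proposition 3.12]{FurdosNenningRainer} --- is to cut off each Taylor term individually, $F=\sum_k \frac{f^{(k)}(x)}{k!}(iy)^k\,\phi_k(y)$ with smooth cutoffs $\phi_k$ at scales tied to the counting function $\ul\Ga_{m}$; then $\db F$ consists of the telescoped top term \emph{plus} the sum of the $\phi_k'$-terms, and estimating that sum is where the hypothesis $\ol\Ga_{n}(Ct)\le\ul\Ga_{m}(t)$ is genuinely consumed, not merely in ``trading sequences''.

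Second, you attribute quasiconvexity to the wrong half of the equivalence. In the sufficiency direction no geometry of $V$ is needed: the decay of $\db F$ is measured by $d(z,\ol V)$ globally, so for $x\in V$ the sliced function $w\mapsto F(x+we_j)$ satisfies $|\db F|\le C\,h_m(\rh|w|)$ on a disk of \emph{fixed} radius, the Cauchy--Pompeiu bounds are uniform in $x$, and to conclude $f\in\cE^{[\fM]}(U)$ it suffices to invoke the hypothesis for balls, which are quasiconvex. Quasiconvexity is needed on the \emph{construction} side: an almost analytic extension forces Whitney-type Taylor remainder estimates between pairs of points of $\ol V$ (integrate along the straight segment in $\C$, which stays within distance $|x-y|$ of $\ol V$), and derivative bounds on a cusped, non-quasiconvex $V$ do not imply such estimates --- this is exactly why the statement quantifies over quasiconvex $V$ only. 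Finally, two items you defer are not routine: in $d>1$, recovering mixed derivatives $\p^\al f$ from one-dimensional slices requires a polarization argument (directional bounds control $\p^\al f$ only up to a factor $\approx e^{|\al|}$), and the Roumieu/Beurling quantifier calibration is precisely the content of R- versus B-regularity in \Cref{def:Rregular}; without carrying it out, the Beurling half of the statement is not actually proved.
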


	In \Cref{sec:holapprox} we shall use \cite[Proposition 3.12]{FurdosNenningRainer}, which is a key ingredient of the proof of  
	\Cref{thm:roumieuextension}.

	\subsection{Weight matrices of moderate growth}
	For positive sequences $M$, $N$ set
	\begin{equation}
	\label{eq:rmatrixmg}
	\on{mg}(M,N) := \sup_{j,k \ge 0, \, j+k\ge 1}\left(\frac{M_{j+k}}{N_jN_k}\right)^{1/(j+k)} \in (0,\infty].
	\end{equation}
	We say that a weight matrix $\fM$ has \emph{R-moderate growth} or \emph{$\{\text{moderate growth}\}$} if
	\begin{equation}
		\label{eq:mmg}
		\A M \in\fM \E N \in \fM : \on{mg}(M,N)<\infty,
	\end{equation}
	and \emph{B-moderate growth} or \emph{$(\text{moderate growth})$} if
	\begin{equation}
		\label{eq:mmgb}
		\A M \in\fM \E N \in \fM : \on{mg}(N,M)<\infty.
	\end{equation}
	Again we say that $\fM$ has \emph{moderate growth} if it has R- and B-moderate growth,
	and [moderate growth] stands for $\{\text{moderate growth}\}$ and $(\text{moderate growth})$, respectively.

	\subsection{Denjoy--Carleman and Braun--Meise--Taylor classes in this framework} \label{thm:BMTmatrix}

	By definition, Denjoy--Carleman classes are described by weight matrices $\fM=\{M\}$ 
	consisting of a single weight sequence $M$. 
	Observe that the weight matrix $\fM=\{M\}$ is regular if and only if the weight sequence $M$ is regular, and
	it has moderate growth if and only if $M$ has moderate growth.

	Let $\om$ be a weight function and let $\mathbf \Om$ be the associated weight matrix (cf.\ \Cref{sec:associatedmatrix}).
	Then, by \cite[Corollaries 5.8 and 5.15]{RainerSchindl12}, as locally convex spaces
	\[
		\cE^{[\om]}(U) = \cE^{[\mathbf{\Om}]}(U),
	\]
	and $\cE^{[\om]}(U) = \cE^{[\Om]}(U)$ for all $\Om \in \mathbf \Om$ if and only if
	\[
		\exists H \ge 1 ~\forall t \ge 0 : 2 \om(t) \le \om(Ht) +H,
	\]
	which is in turn equivalent to the fact that some (equivalently each) $\Om \in \mathbf \Om$ has moderate growth; see also \cite{BMM07}.

	The associated weight matrix $\mathbf \Om$ always has moderate growth, by \eqref{eq:fctmod}.
	It is equivalent to a regular weight matrix $\fS$ (that means $\cE^{[\om]} = \cE^{[\fS]}$) if and only if $\om$ is equivalent to a concave weight function.
	In fact, a $\cE^{[\om]}$-version of the almost analytic extension theorem \ref{thm:roumieuextension} holds if and only if $\om$ is
	equivalent to a concave	weight function; see \cite[Theorem 4.8]{FurdosNenningRainer}.
	The weight matrix $\fS = \{S^x\}_{x>0}$ has the property that for each $x>0$ the sequence $s^x$ is log-convex and satisfies
	$\on{mg}(s^x,s^{2x}) =: H < \infty$ and thus $h_{s^x}(t) \le h_{s^{2x}}(Ht)^2$ for all $x,t>0$; see \cite[Proposition 3]{Rainer:2020aa}.

	\subsection{General ultradifferentiable classes have property \texorpdfstring{$(\sD)$}{(D)}}

	\begin{theorem}[{$\tensor[_1]{\cE}{^{[\fM]}}$}]
		\label{thm:matrix1dimJoris}
		Let $\fM$ be a [regular] weight matrix of [moderate growth]. 
		Then $\tensor[_1]{\cE}{^{[\fM]}}$ has property $(\sD)$.
	\end{theorem}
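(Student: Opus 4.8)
The plan is to reduce Theorem \ref{thm:matrix1dimJoris} to the holomorphic-approximation characterization of $\cE^{[\fM]}$ (the forthcoming Proposition 332v, extending step (i) of Thilliez's strategy) combined with the abstract core construction (the forthcoming \texttt{lem:corelemma}, which carries out step (ii)). So I will not attempt a self-contained argument from the Banach-space definitions; instead I will assemble the machinery the paper sets up. Concretely, suppose $f$ is a germ at $0 \in \R$ with $g := f^j$ and $h := f^{j+1}$ both in $\tensor[_1]{\cE}{^{[\fM]}}$ for some $j \ge 1$. I want to produce, for a suitable weight sequence $M \in \fM$ (Roumieu) or for every $M \in \fM$ (Beurling), a family of holomorphic approximants to $f$ on shrinking complex neighborhoods of a real interval, with the uniform bounds demanded by the holomorphic-approximation description, and then invoke \Cref{prop:332v} to conclude $f \in \tensor[_1]{\cE}{^{[\fM]}}$.

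The key steps, in order, are as follows. First I would fix representatives of $g,h$ on a bounded interval $U \ni 0$ and, using \Cref{thm:roumieuextension}, pass to $(h_m,\rh)$-almost analytic extensions $G,H \in C^1_c(\C)$ of $g,h$, where the relevant sequence $m$ comes from the weight sequence $M \in \fM$ governing $g,h$ in the Roumieu case, and where the [moderate growth] hypothesis is what lets me trade $M$ for a comparable $N \in \fM$ when squaring or multiplying bounds, cf.\ \eqref{eq:mmg} and \eqref{eq:mmgb}. Second, following step (ii), I would form the regularized quotient
\[
	u_\ep = \vh_\ep \,\frac{\ol G_\ep H_\ep}{\max\{|G_\ep|,r_\ep\}^2},
\]
where $G_\ep,H_\ep$ are the mollified/approximate extensions on the strip of width $\sim\ep$, $\vh_\ep$ is a cutoff supported near $\ol U$, and $r_\ep>0$ is a threshold chosen to suppress small divisors; the point is that on the real axis $\ol G H/|G|^2 = \ol{f^j}f^{j+1}/|f^j|^2 = f$, so $u_\ep$ is designed to approximate $f$. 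Third, I would estimate $\ol\p u_\ep$: the cutoff contributes a harmless term supported where $|G_\ep|$ is bounded below, and the genuinely analytic part of the bound comes from $\ol\p G_\ep$ and $\ol\p H_\ep$, each controlled by $h_m$ of the distance to $\ol U$ via the almost-analytic-extension inequality, together with the choice of $r_\ep$. Fourth, I would solve the inhomogeneous $\ol\p$-equation $\ol\p v_\ep = \ol\p u_\ep$ with good $L^\infty$ (or weighted) bounds in one complex variable, so that $g_\ep := u_\ep - v_\ep$ is holomorphic on the strip and still uniformly close to $f$ with the $h_m$-type decay required; then \Cref{prop:332v} delivers $f \in \tensor[_1]{\cE}{^{[\fM]}}$. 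Finally, for the Beurling case I would run the same construction for each $M \in \fM$ (or reduce it to the Roumieu case by a sequence-replacement argument in the spirit of \Cref{lem:redDC}), and the [moderate growth] assumption of the theorem is precisely what guarantees the intermediate sequences stay inside $\fM$.

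The main obstacle I expect is the quantitative bookkeeping in step three, i.e.\ choosing $r_\ep$ (and the geometry of the strip and cutoff) so that simultaneously $u_\ep \to f$ uniformly on $U$, the divisor $\max\{|G_\ep|,r_\ep\}^2$ never destroys the $h_m$-decay of $\ol\p u_\ep$, and the $\ol\p$-correction $v_\ep$ inherits bounds of the same order. The difficulty is that $G$ may vanish on $U$ (indeed $f$, hence $g=f^j$, can have zeros), so the regularization must absorb the singularity of $1/|G|^2$ without introducing error that outpaces the approximation; balancing these competing demands is the heart of Thilliez's argument, and my task is to carry it out at the level of generality of a [regular] weight matrix of [moderate growth] rather than a single sequence. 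Working in dimension one is what makes the $\ol\p$-solvability step clean (no $\ol\p$-cohomology obstruction and sharp one-variable kernels are available), which is why this theorem, unlike its multidimensional counterparts, needs no non-quasianalyticity hypothesis; I would isolate the whole construction in the abstract \Cref{lem:corelemma} so that Theorems \ref{thm:ThilliezJorisQ}, \ref{thm:ThilliezJorisBQ}, \ref{thm:weightfunction} and the multidimensional results all follow by feeding in the appropriate weight data.
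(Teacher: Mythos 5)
Your overall architecture coincides with the paper's: reduce the theorem to the holomorphic-approximation characterization (\Cref{prop:332v}) together with an abstract core lemma implementing Thilliez's regularized quotient and a $\ol\p$-correction (\Cref{lem:corelemma}), handling the Roumieu case via the total order of $\fM$ and the Beurling case by fixing each target sequence $M\in\fM$ (the first of your two Beurling options is what the paper does; there is no matrix-level analogue of \Cref{lem:redDC}). The genuine gap is in your step three. You feed the almost analytic extensions $G_\ep,H_\ep$ themselves into the quotient $u_\ep$ and assert that $\ol\p u_\ep$ is controlled by $\ol\p G_\ep$, $\ol\p H_\ep$ ``together with the choice of $r_\ep$''. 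That is not where the difficulty lies, and as described the step fails. The crux of the small-divisor control is that on the set $\{|G_\ep|\le r_\ep\}$ one needs $|H_\ep|$ to be small of the same ultradifferentiably flat order; this encodes the algebraic identity $h^j=g^{j+1}$, which holds only on $[-1,1]$, being propagated into the complex strip with $h_m$-flat error. For \emph{holomorphic} approximants this propagation is exactly the paper's \Cref{lem:324v} (Hadamard's three-lines theorem applied to $h_\ep^j-g_\ep^{j+1}$, combined with the moderate-growth inequality \eqref{eq:mgsquare}), which produces the quantity $\de_\ep$ of \eqref{eq:epshalf} and dictates the threshold $r_\ep=\de_\ep^{1/(j+1)}$. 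For almost analytic extensions this propagation is not available by naive means: $H^j-G^{j+1}$ vanishes on $[-1,1]$ and has flat $\ol\p$, but these two facts only yield $|H^j(z)-G^{j+1}(z)|=O(d(z,[-1,1]))$ from the $C^1$-bound --- linear decay, not $h_m$-decay. With $\de_\ep$ only of size $\ep$, the final error in \eqref{eq:holapprox2} is merely polynomial in $\ep$, and \Cref{prop:332v}(ii) then gives nothing resembling $\cE^{[\fM]}$-membership.

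The repair is precisely the two-stage structure that your sketch compresses into one stage: first holomorphize $g$ and $h$ \emph{separately} by \Cref{prop:332v}(i) (almost analytic extension plus a first $\ol\p$-problem), obtaining genuinely holomorphic $g_\ep,h_\ep\in\cH(\Om_\ep)\cap C^0(\ol\Om_\ep)$ with the bounds \eqref{eq:ghbound}--\eqref{eq:jj+1approx}; only then form $u_\ep$ from these and solve the second $\ol\p$-problem for the quotient. Thilliez's Lemmas 4.2.1--4.2.4, which the paper invokes verbatim for \eqref{eq:uepbound}--\eqref{eq:vepbound}, require holomorphic data (e.g.\ Cauchy estimates on $g_\ep'$) and do not apply to $G_\ep,H_\ep$. (One can in fact show that two almost analytic extensions of the same function differ flatly off the axis, but the proof of that fact is again holomorphization plus three lines --- that is, the missing step itself.) Two peripheral corrections: \Cref{thm:ThilliezJorisQ,thm:ThilliezJorisBQ} do not follow from \Cref{lem:corelemma}, but rest on Thilliez's quasianalytic polynomial-equation theorem \cite{Thilliez10}; and the reason the multidimensional theorems require non-quasianalyticity is not $\ol\p$-solvability in several variables (the whole construction is one-dimensional) but that testing $\cE^{\{\fM\}}$-regularity along curves is only available in the non-quasianalytic Roumieu setting.
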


	The proof is given in \Cref{sec:proof}. It builds upon a characterization 
	of the class $\cE^{[\fM]}$ by holomorphic approximation; see \Cref{sec:holapprox}.

	We may infer a multidimensional result, since non-quasianalytic $\cE^{\{\fM\}}$-regularity can be tested along curves;	
	this useful tool is available in a satisfactory manner only in the non-quasianalytic Roumieu setting.
	We need two additional properties of the weight matrix:
	\begin{equation}
	\label{eq:matrixnonqa}
	\E M \in \fM: ~ \sum_{k=0}^\infty \frac{1}{\mu_k}<\infty.
	\end{equation}
	which means that $\cE^{\{\fM\}}$ admits non-trivial	functions of compact support, and
	\begin{equation}
	\label{eq:fdb}
	\A M \in \fM \E N \in \fM: m^\circ \preceq n,
	\end{equation}
	where $m_k^\circ:= \max \{m_j m_{\al_1}\cdots m_{\al_j}: \al_i \in \N_{>0},\, \al_1+\dots+\al_j = k\}$.
	Condition \eqref{eq:fdb} is equivalent to composition closedness of $\cE^{\{\fM\}}$ (which follows from 
	the arguments in \cite[Theorem 4.9]{RainerSchindl12}) 
	and is satisfied by every R-regular weight matrix. 
	Indeed, if $\fM$ is R-regular, then $\cE^{\{\fM\}}$ has a description by almost analytic extension, by \Cref{thm:roumieuextension}. 
	It is easy to see (cf.\ \cite[Proposition 1.1]{FurdosNenningRainer}) that the latter condition is preserved by composition of functions.

	Under these assumptions, a function $f$ defined on an open set $U \subseteq \R^d$
	is of class $\cE^{\{\fM\}}$ if and only if $f\o c$ is of class $\cE^{\{\fM\}}$ for all
	$\cE^{\{\fM\}}$-curves in $U$; see \cite{KMRc,KMRq} and \cite[Theorem 10.7.1]{Schindl14}.

	\begin{theorem}[Non-quasianalytic {$\tensor[_d]{\cE}{^{\{\fM\}}}$}]
		\label{thm:matrixmultdimJoris}
		Let $\fM$ be an R-regular weight matrix of R-moderate growth satisfying \eqref{eq:matrixnonqa}.  
		Then $\tensor[_d]{\cE}{^{\{\fM\}}}$ has property $(\sD)$.
	\end{theorem}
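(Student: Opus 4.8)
The plan is to reduce the multidimensional Roumieu statement \Cref{thm:matrixmultdimJoris} to the one-dimensional result \Cref{thm:matrix1dimJoris}, exactly as the text foreshadows, by exploiting the fact that non-quasianalytic $\cE^{\{\fM\}}$-regularity can be tested along $\cE^{\{\fM\}}$-curves. The hypotheses on $\fM$ (R-regularity, R-moderate growth, and \eqref{eq:matrixnonqa}) are precisely those needed to invoke the curve-testing theorem cited from \cite{KMRc,KMRq} and \cite[Theorem 10.7.1]{Schindl14}: a smooth $f$ on an open $U \subseteq \R^d$ is of class $\cE^{\{\fM\}}$ if and only if $f \o c$ is of class $\cE^{\{\fM\}}$ for every $\cE^{\{\fM\}}$-curve $c$ in $U$. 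Here one should note that R-regularity already entails composition closedness via \eqref{eq:fdb}, so the class $\cE^{\{\fM\}}$ of curves is well-behaved and the testing criterion applies.

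First I would set up the data of the division property: let $f$ be a germ at $0 \in \R^d$ with $f^j, f^{j+1} \in \tensor[_d]{\cE}{^{\{\fM\}}}$ for some $j \ge 1$, and fix representatives on a neighborhood of $0$. The first obstacle to address is that $f$ itself is a priori merely a function, not known to be smooth; but Joris's classical theorem (applied via Boman's curve-testing characterization of $C^\infty$ in the smooth category, as recalled in the introduction) already yields $f \in C^\infty$ from $f^j, f^{j+1} \in C^\infty$. Thus I may assume from the outset that $f$ is smooth, which is what legitimizes testing its $\cE^{\{\fM\}}$-regularity along curves.

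Next I would take an arbitrary $\cE^{\{\fM\}}$-curve $c : I \to U$ through the relevant neighborhood and consider the one-variable germ $f \o c$. Since $\cE^{\{\fM\}}$ is closed under composition (by \eqref{eq:fdb}, guaranteed by R-regularity), we have $(f \o c)^j = f^j \o c \in \tensor[_1]{\cE}{^{\{\fM\}}}$ and likewise $(f \o c)^{j+1} = f^{j+1} \o c \in \tensor[_1]{\cE}{^{\{\fM\}}}$. Now the one-dimensional division property \Cref{thm:matrix1dimJoris} applies: it requires $\fM$ to be $\{\text{regular}\}$ and of $\{\text{moderate growth}\}$, which are exactly our standing hypotheses. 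Hence $f \o c \in \tensor[_1]{\cE}{^{\{\fM\}}}$. Since $c$ was an arbitrary $\cE^{\{\fM\}}$-curve, the curve-testing theorem gives $f \in \tensor[_d]{\cE}{^{\{\fM\}}}$, which is the desired conclusion.

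The main obstacle, and the step deserving the most care, is the interface between the local division hypothesis and the global curve-testing statement: one must ensure that the representatives of $f^j$ and $f^{j+1}$ can be taken on a common neighborhood on which $f$ is defined and smooth, and that the curve-testing theorem is being applied on an honest open set rather than to germs. This is routine but must be stated cleanly, since the theorems are phrased for functions on open sets while property $(\sD)$ is phrased for germs; shrinking to a suitable relatively compact neighborhood of $0$ resolves it. I would also remark that the Beurling analogue is deliberately excluded here, because the curve-testing tool is available only in the non-quasianalytic Roumieu setting, which is precisely why this theorem is stated for $\cE^{\{\fM\}}$ and not $\cE^{[\fM]}$.
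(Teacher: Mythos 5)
Your proposal is correct and is essentially the paper's own proof: the paper likewise deduces \Cref{thm:matrixmultdimJoris} directly from \Cref{thm:matrix1dimJoris} via the curve-testing theorem of \cite{KMRc,KMRq} and \cite[Theorem 10.7.1]{Schindl14}, using that R-regularity yields composition closedness \eqref{eq:fdb} and that \eqref{eq:matrixnonqa} supplies non-quasianalyticity. Your preliminary reduction to $f \in C^\infty$ via classical Joris--Boman is harmless but not needed, since the quoted testing theorem applies to arbitrary functions $f$, not only to a priori smooth ones.
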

	
	\begin{proof}
		This follows immediately from \Cref{thm:matrix1dimJoris} and the above observations. 
	\end{proof}

	Note that \Cref{thm:matrixmultdimJoris} implies \Cref{thm:ThilliezJoris} as a special case.

	\begin{remark} \label{rem:intersectableF}
		The family $\mathbf Q = \{Q^n\}_{n \in \N_{\ge 1}}$ of quasianalytic intersectable weight sequences referred to at the end of \Cref{rem:intersectable}
		actually is a regular weight matrix of moderate growth. 
		The Roumieu class $\cE^{\{\mathbf Q\}}$ is quasianalytic and, since \Cref{thm:quasiJoris} applies to every $M \in \mathbf Q$, 
		we conclude that $\tensor[_d]{\cE}{^{\{\mathbf Q\}}}$ has property $(\sD)$. 

		Note that there is no weight sequence $M$ with $\cE^{\{M\}} = \cE^{\{\mathbf Q\}}$ and 
		no weight function $\om$ with $\cE^{\{\om\}} = \cE^{\{\mathbf Q\}}$. 
		This follows from the fact that $Q^{n} \le Q^{n+1} \not\preceq Q^{n}$,
		in analogy to the proof given in \cite[Theorem 5.22]{RainerSchindl12}; see also Remark 5.25 there.   
	\end{remark}

	\section{Holomorphic Approximation of functions in \texorpdfstring{$\cE^{[\fM]}$}{EM}} \label{sec:holapprox}
	
	In this section we prove a characterization of the class $\cE^{[\fM]}$ (in dimension one) by holomorphic approximation.
	It generalizes \cite[Proposition 3.3.2]{Thilliez:2020ac}.

	For notational convenience, we set $\|f\|_{A} := \sup_{z \in A} |f(z)|$ for any complex valued function $f$, where $A$ is any set in the domain of $f$.

	\subsection{Some preparatory observations}

	\begin{lemma}
	\label{lem:hmoderategrowth}
		Let $M,N$ be weight sequences satisfying $m_k^{1/k} \to \infty$, $n_k^{1/k} \to \infty$, and $C:=\on{mg}(M,N)<\infty$. Then
		\begin{align}
			\label{eq:hmoderategrowth}
			h_m(t) &\le C^jn_j t^j h_n(Ct), \quad t>0, ~j \in \N,
			\\
			\label{eq:mgsquare}
			h_m(t) &\le h_n\Big(\frac{eC}{2}t\Big)^2, \quad t>0.
		\end{align}
	\end{lemma}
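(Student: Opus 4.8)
The plan is to first convert the hypothesis $C=\on{mg}(M,N)<\infty$ into a pointwise bound on the lowercase sequences, and then exploit that the single infimum defining $h_m$ factorizes once the index is split. I would begin by recording the sequence inequality: by definition $C=\on{mg}(M,N)$ gives $M_{j+p}\le C^{j+p}N_jN_p$ for all $j,p\in\N$, and dividing by $(j+p)!$ while writing $N_j=n_j\,j!$, $N_p=n_p\,p!$ yields
\[
m_{j+p}=\frac{M_{j+p}}{(j+p)!}\le C^{j+p}n_jn_p\,\frac{j!\,p!}{(j+p)!}=C^{j+p}\binom{j+p}{j}^{-1}n_jn_p\le C^{j+p}n_jn_p,
\]
the last step because $\binom{j+p}{j}\ge 1$.

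For \eqref{eq:hmoderategrowth} I would fix $j$ and note that for every $p\in\N$ the index $j+p$ is admissible in the infimum defining $h_m$, so the bound above gives $h_m(t)\le m_{j+p}t^{j+p}\le (C^jn_jt^j)(C^pn_pt^p)$. Taking the infimum over $p$ and recognizing that
\[
\inf_{p}C^pn_pt^p=\inf_p n_p(Ct)^p=h_n(Ct)
\]
produces exactly $h_m(t)\le C^jn_jt^j\,h_n(Ct)$.

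For \eqref{eq:mgsquare} the cleanest route is simply to optimize the still-free parameter $j$ in the inequality just proved: since its left-hand side does not depend on $j$, taking $\inf_j$ of the right-hand side gives
\[
h_m(t)\le \Big(\inf_j C^jn_jt^j\Big)\,h_n(Ct)=h_n(Ct)^2\le h_n\!\Big(\tfrac{eC}{2}t\Big)^2,
\]
the last inequality because $h_n$ is nondecreasing and $e/2>1$. This even yields the sharper bound $h_n(Ct)^2$. Alternatively, to obtain the stated factor $\tfrac{eC}{2}$ directly one restricts the infimum to even indices $k=2p$ and uses the elementary estimate $\tfrac{(p!)^2}{(2p)!}\le(\tfrac{e}{2})^{2p}$ (which follows from $(2p)!\ge(2p/e)^{2p}$ and $p!\le p^p$) to get $m_{2p}t^{2p}\le\big[n_p(\tfrac{eC}{2}t)^p\big]^2$, whence $h_m(t)\le\inf_p m_{2p}t^{2p}\le h_n(\tfrac{eC}{2}t)^2$.

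The only genuine step is the observation that splitting the index $k=j+p$ turns the single infimum $h_m$ into a product of two infima, each reconstituting a copy of $h_n$; everything else is the routine passage between $M/N$ and $m/n$ together with elementary factorial estimates. I therefore expect no serious obstacle, the only mild care being the constant bookkeeping in \eqref{eq:mgsquare}: the stated $\tfrac{eC}{2}$ is the clean constant coming from the diagonal Stirling estimate, while the derivation from \eqref{eq:hmoderategrowth} shows it is not optimal.
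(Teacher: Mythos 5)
Your proof is correct, and it does more than the paper's. For \eqref{eq:hmoderategrowth} your argument coincides with the paper's: both pass from $\on{mg}(M,N)\le C$ to $\on{mg}(m,n)\le C$ (your binomial-coefficient step is exactly the paper's remark that $\on{mg}(m,n)\le\on{mg}(M,N)$) and then split the index $k=j+p$ inside the infimum defining $h_m$. The difference is in \eqref{eq:mgsquare}: the paper gives no argument at all and simply cites an external result (Lemma 3.13 of Rainer--Schindl, \emph{On the extension of Whitney ultrajets}), whereas you supply a self-contained derivation, in fact two. Your first one -- taking $\inf_j$ over the free parameter in \eqref{eq:hmoderategrowth} and noting $\inf_j C^jn_jt^j=h_n(Ct)$ -- is a clean observation that yields the sharper bound $h_m(t)\le h_n(Ct)^2$, from which \eqref{eq:mgsquare} follows since $h_n$ is nondecreasing and $e/2>1$. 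Your second one, restricting the infimum to even indices and using $(p!)^2/(2p)!\le(e/2)^{2p}$, is essentially the proof behind the cited lemma and explains where the stated constant comes from: that argument uses only the diagonal uppercase bound $M_{2p}\le C^{2p}N_p^2$, so the Stirling factor $e/2$ is genuinely needed there, while your route through $\on{mg}(m,n)\le C$ absorbs it into the binomial step. So your proposal both closes the external dependency and shows that, under the lemma's hypotheses, the constant $eC/2$ in \eqref{eq:mgsquare} is not optimal.
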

	
	\begin{proof}
		Note that $\on{mg}(m,n) \le \on{mg}(M,N)$.  Thus, for all $j \in \N$ and $t>0$,
		\begin{align*}
		h_m(t) &\le \inf_{k\ge 0} m_{k+j}t^{k+j} \le \inf_{k\ge 0}  n_j n_k (Ct)^{k+j}
		= C^j n_j t^j h_n(Ct).
		\end{align*}
		For \eqref{eq:mgsquare} we refer to \cite[Lemma 3.13]{Rainer:2019ac}.
	\end{proof}

	For $\ep>0$ let $\Om_\ep$ denote the interior of the ellipse in $\C$ 
	with vertices $\pm \cosh(\varepsilon)$ and co-vertices $\pm i\sinh(\varepsilon)$.
	By $\cH(\Om_\ep)$ we denote the space of holomorphic functions on $\Om_\ep$.	
	The following lemma is a simple modification of \cite[Lemma 3.2.4]{Thilliez:2020ac}.
	
	\begin{lemma}
		\label{lem:324v}
		Let $M,N$ be two weight sequences satisfying $m_k^{1/k} \to \infty$, $n_k^{1/k} \to \infty$, 
		and $C:=\on{mg}(M,N)<\infty$. Let $\ep>0$.
		Let $g \in \cH(\Om_\ep)\cap C^0(\ol \Om_\ep)$ and assume that there are constants $L,a_1,a_2>0$ such that
		\[
			\|g\|_{\Om_\ep} \le L,\quad  \|g\|_{[-1,1]} \le a_1h_m(a_2\varepsilon).
		\]
		Then with $a_3:=\max\{a_1,L\}$ and $a_4:=eCa_2$
		we have
		\[
		\|g\|_{\Om_{\ep/2}} \le a_3 h_n(a_4 \varepsilon).
		\]
	\end{lemma}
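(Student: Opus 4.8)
The statement is a Phragmén–Lindelöf–type interpolation: we know $g$ is bounded by $L$ on the full ellipse $\Om_\ep$ and much smaller—of size $a_1 h_m(a_2\ep)$—on the real segment $[-1,1]$, and we want to upgrade the small bound to hold on the shrunken ellipse $\Om_{\ep/2}$, at the cost of passing from $h_m$ to $h_n$ via the moderate growth constant. The natural engine is the Hadamard three-lines / harmonic-majorant principle applied to the subharmonic function $\log|g|$ on the elliptical annulus $\Om_\ep \setminus \ol\Om_{\ep/2}$ (or rather a maximum-principle estimate on $\Om_{\ep/2}$), interpolating between the two known bounds. The key geometric fact is that the ellipses $\Om_t$ are the images of circles $|w|=e^t$ under the Joukowski map $z = \tfrac12(w + w^{-1})$, so passing to the $w$-coordinate turns the elliptical family into concentric circles and makes the interpolation a clean three-circles argument.

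\textbf{Key steps.} First I would recall the Thilliez estimate \cite[Lemma 3.2.4]{Thilliez:2020ac} that this claim modifies: there one interpolates between a bound on $\Om_\ep$ and a bound on the segment to get a bound on $\Om_{\ep/2}$ involving the \emph{same} weight $h_m$. The present lemma is the ``mixed'' version where the input weight $m$ and output weight $n$ differ, so the only genuinely new ingredient is the translation of $h_m$ into $h_n$. For that I would invoke \Cref{lem:hmoderategrowth}, specifically the inequality \eqref{eq:mgsquare}, which says $h_m(t) \le h_n(\tfrac{eC}{2}t)^2$; the appearance of the square is exactly what a three-lines interpolation produces (a product of the two endpoint bounds), and the constant $eC/2$ is precisely what gets absorbed into $a_4 = eCa_2$. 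So the strategy is: run Thilliez's three-circles interpolation verbatim to obtain an intermediate bound on $\Om_{\ep/2}$ of the form $a_3 \cdot (\text{something})$, where the ``something'' is a geometric-mean interpolant between $L$ and $a_1 h_m(a_2\ep)$; then apply \eqref{eq:mgsquare} to convert the $h_m$-factor into an $h_n$-factor, checking that the square in \eqref{eq:mgsquare} matches the exponent produced by the interpolation and that the constants combine into $a_3 = \max\{a_1,L\}$ and $a_4 = eCa_2$.

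\textbf{Execution details.} Concretely, the harmonic measure of the circle $|w| = e^\ep$ versus $|w|=1$ (the image of $[-1,1]$) as seen from the intermediate circle $|w| = e^{\ep/2}$ assigns weight $\tfrac12$ to each, so $\log|g|$ on $\Om_{\ep/2}$ is bounded by $\tfrac12\log\|g\|_{\Om_\ep} + \tfrac12\log\|g\|_{[-1,1]}$, giving $\|g\|_{\Om_{\ep/2}} \le L^{1/2}\,(a_1 h_m(a_2\ep))^{1/2}$. Here the equal weights are forced by the factor-of-two relation between $\ep$ and $\ep/2$ in the exponential coordinate, and this is the structural reason the \emph{square} in \eqref{eq:mgsquare} is exactly right: substituting $h_m(a_2\ep) \le h_n(a_4\ep)^2$ with $a_4 = eCa_2$ yields $\|g\|_{\Om_{\ep/2}} \le L^{1/2} a_1^{1/2} h_n(a_4\ep)$, and bounding $L^{1/2}a_1^{1/2} \le \max\{a_1,L\} = a_3$ closes the estimate.

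\textbf{Main obstacle.} The only delicate point is handling the boundary and the degenerate geometry of the Joukowski map. The segment $[-1,1]$ is the image of the unit circle $|w|=1$, but the Joukowski map folds this circle two-to-one onto the segment and has critical points at $w = \pm 1$ (mapping to the vertices $z = \pm 1$), so $g \o (\text{Joukowski})$ is a well-defined holomorphic function on the annulus $1 \le |w| \le e^\ep$ only after one verifies it descends correctly through the symmetrization $w \mapsto w^{-1}$. Since $g$ is merely assumed continuous up to $\ol\Om_\ep$ and holomorphic inside, one must justify applying the maximum principle to the subharmonic $\log|g|$ on the \emph{open} elliptical annulus with the stated boundary bounds, which is routine but requires care that the boundary bound on $[-1,1]$ controls $\log|g|$ on the inner boundary circle in the $w$-plane. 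I expect this to be entirely standard given the cited \cite[Lemma 3.2.4]{Thilliez:2020ac}, so the real work is the bookkeeping of constants rather than any new analytic difficulty.
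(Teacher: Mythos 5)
Your proposal is correct and takes essentially the same route as the paper: the paper conjugates with $z \mapsto \sin(\ep z)$ and applies Hadamard's three-lines theorem on the strip $\{|\Im z|<1\}$ — which is exactly your Joukowski/three-circles harmonic-measure argument rewritten in exponential coordinates — arriving at the same geometric-mean bound $\|g\|_{\Om_{\ep/2}} \le a_1 (K h_m(a_2\ep))^{1/2}$ with $K=\max\{1,L/a_1\}$, and then concluding via \eqref{eq:mgsquare} precisely as you do. Your ``main obstacle'' is in fact a non-issue: since the Joukowski map is injective on $\{|w|>1\}$ and maps the open annulus $\{1<|w|<e^\ep\}$ conformally onto $\Om_\ep\setminus[-1,1]$, the pullback $g\circ J$ is automatically well-defined and holomorphic there (no descent through $w\mapsto w^{-1}$ is needed), just as the paper's use of $\sin$ sidesteps the same point.
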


	\begin{proof}
		Let $f(z):=\frac{1}{a_1} g (\sin(\ve z))$. Since $z \mapsto \sin(\ve z)$ maps 
		the horizontal strip $S:=\{z \in \C:~|\Im(z)|<1\}$ to $\Om_\ve$,
		we get that $f \in \cH(S) \cap C^0(\ol S)$ is bounded by $K:= \max\{1,\frac{L}{a_1}\}$ 
		on the whole of $S$ and by $h_m(a_2\ve)$ on $\R$. Thus an application of Hadamard's three lines theorem gives
		\[
		|f(z)|\le h_m(a_2\ve)^{1-|\Im(z)|}K^{|\Im(z)|}, \quad z \in S.
		\]
		Since $h_m\le 1$ and every $w\in \Om_{\ve/2}$ can be written as $w = \sin(\ve z)$ 
		for some $w \in S$ with $|\Im(w)|\le 1/2$, we obtain
		\[
		|g(w)|\le a_1 (K h_m(a_2\varepsilon))^{1/2}.
		\]
		The statement follows from \eqref{eq:mgsquare}.
	\end{proof}

	\subsection{Condition \texorpdfstring{$(\cP_{[\fM]})$}{PM}}
		Let $\fM$ be a weight matrix.
	\begin{enumerate}
		\item[$(\cP_{\{\fM\}})$] We say that a function $f : [-1,1] \to \C$ satisfies $(\cP_{\{\fM\}})$ if 
	 there exist
	 $M \in \fM$, constants $K,c_1,c_2>0$, and 
	  a family $(f_\ep)_{0<\ep\le \ep_0}$ of functions
	 $f_\ve \in \cH(\Om_\ep) \cap C^0(\ol \Om_\ep)$ such that for all $0<\ep\le \ep_0$,
	\begin{align}
		\label{PM1rmatrix}
		\|f_\varepsilon\|_{\Om_\ep} &\le K,\\
			\label{PM2rmatrix}
			\|f-f_\ep\|_{[-1,1]} &\le c_1 h_m(c_2\varepsilon).
	\end{align}
		\item[$(\cP_{(\fM)})$] We say that a function $f : [-1,1] \to \C$ satisfies $(\cP_{(\fM)})$ if 
	for all $M \in \fM$ and all $c_2 > 0$ 
	there exist constants $K,c_1>0$ and a family $(f_\ep)_{0<\ep\le \ep_0}$ of functions 
	$f_\ve \in \cH(\Om_\ep) \cap C^0(\ol \Om_\ep)$ such that 
	\eqref{PM1rmatrix} and \eqref{PM2rmatrix} hold
	for all $0<\ep\le \ep_0$.
	\end{enumerate}
	Note that $(\cP_{\{\fM\}})$ generalizes condition $(\cP_{M})$ of \cite{Thilliez:2020ac}.

	\subsection{Description by holomorphic approximation}

	\begin{theorem}
		\label{prop:332v}
			\thetag{i} Let $M^{(i)}$,  $1\le i \le 3$, be weight sequences with $(m^{(i)}_k)^{1/k} \rightarrow \infty$ and
			\begin{gather}
				\E B_1\ge 1 \A t>0 : \ol \Ga_{ m^{(2)}} (B_1 t) \le \ul \Ga_{ m^{(1)}}(t), \label{eq:countcomp}
				\\
				\E B_2\ge 1 \A j \in \N : m^{(2)}_{j+1} \le B_2^{j+1}m^{(3)}_j. \label{ass2}
			\end{gather}
			Then for each $f \in \cB^{M^{(1)}}_{B_0}((-1,1))$
			there exist positive constants $K,c_1,c_2$ and functions $f_\ve \in \cH(\Om_\ep)\cap C^0(\ol \Om_\ep)$
			such that for all small $\ep>0$
			\begin{equation}
				\label{eq:holapprox}
				 \|f_\ve\|_{\Om_\ep}\le K, \quad \|f-f_\ve\|_{[-1,1]}\le c_1 h_{m^{(3)}}(c_2\ve).
			\end{equation}
			The constants $K,c_1,c_2$ only depend on $B_i$, in particular, $c_2 = CB_0B_1$, where $C$ is an absolute constant.

			\thetag{ii} Let $N^{(i)}$, $1\le i \le 3$, be weight sequences with
			$(n^{(i)}_k)^{1/k} \rightarrow \infty$ and $\on{mg}(N^{(i)},N^{(i+1)})=D^{(i)}<\infty$.
			Let $f : [-1,1] \to \C$ be a function. 
			Assume that there exist positive constants $K,c_1,c_2$ and functions $f_\ve \in \cH(\Om_\ep)\cap C^0(\ol \Om_\ep)$
			such that for all small $\ep>0$ 
			\begin{equation}
				\label{eq:holapprox2}
				 \|f_\ve\|_{\Om_\ep}\le K, \quad \|f-f_\ve\|_{[-1,1]}\le c_1 h_{n^{(1)}}(c_2\ve).
			\end{equation}
			Then
			$f \in \cB^{N^{(3)}}_\si((-b,b))$ for every $b<1$,
			where $\si := \frac{2eD^{(1)} D^{(2)}c_2}{E(1-b)}$ and $E$ is an absolute constant.
			
			\thetag{iii} If $\fM$ is a [regular] weight matrix of [moderate growth], then
		\begin{equation}
			\label{eq:PMmatrix}
			f \in \cB^{[\fM]}((-1,1)) \Rightarrow f ~\text{satisfies}~(\cP_{[\fM]})\Rightarrow f \in \cE^{[\fM]}((-1,1)).
		\end{equation}
	\end{theorem}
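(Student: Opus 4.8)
The plan is to establish the two analytic building blocks \thetag{i} and \thetag{ii} --- which convert between derivative bounds and holomorphic approximation on the shrinking ellipses $\Om_\ep$ --- and then to deduce \thetag{iii} by feeding in the regularity and moderate growth axioms of the weight matrix, treating the Roumieu and Beurling cases through the quantifier structure of $(\cP_{[\fM]})$.

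For \thetag{i}, I would first produce an almost analytic extension of $f$. Since $f \in \cB^{M^{(1)}}_{B_0}((-1,1))$, its derivatives are bounded by $B_0^{|\al|}M^{(1)}_{|\al|}$, and \cite[Proposition 3.12]{FurdosNenningRainer} yields a $C^1$ almost analytic extension $F$ with $F|_{(-1,1)} = f$ whose $\db F$ decays like $h_{m^{(2)}}$ in the distance to the real axis; this is precisely where the counting-function comparison \eqref{eq:countcomp} enters, and it fixes $c_2 = CB_0B_1$. Next, for each small $\ep>0$ I would solve $\db u_\ep = \db F$ on the ellipse $\Om_\ep$ by the Cauchy transform and set $f_\ep := F - u_\ep$, which is holomorphic on $\Om_\ep$. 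The uniform bound $\|f_\ep\|_{\Om_\ep}\le K$ comes from the boundedness of $F$ together with the integral estimate for $u_\ep$. For the approximation bound I would use that $f = F$ on $[-1,1]$, so $\|f-f_\ep\|_{[-1,1]} = \|u_\ep\|_{[-1,1]}$; since $\Om_\ep$ has height $\sim\ep$ along the imaginary axis, $|\db F| \lesssim h_{m^{(2)}}(c_2\ep)$ there, and the single power of $\ep$ lost to the area integration is recovered by the index shift \eqref{ass2}, via $h_{m^{(2)}}(t)/t \le B_2 h_{m^{(3)}}(B_2 t)$, producing the desired $c_1 h_{m^{(3)}}(c_2\ep)$.

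For \thetag{ii} I would run a dyadic telescoping argument. Put $\ep_j := 2^{-j}\ep_0$ and $g_j := f_{\ep_{j+1}} - f_{\ep_j}$, holomorphic on $\Om_{\ep_{j+1}}$, bounded by $2K$ there and by $2c_1 h_{n^{(1)}}(2c_2\ep_{j+1})$ on $[-1,1]$. \Cref{lem:324v}, applied with the moderate growth constant $D^{(1)}$, then upgrades this to a bound of order $h_{n^{(2)}}$ on the half-ellipse $\Om_{\ep_{j+2}}$. Writing $f = f_{\ep_0} + \sum_{j\ge 0} g_j$ on $[-1,1]$ and differentiating termwise, I would estimate each $g_j^{(k)}(x)$ for $x \in (-b,b)$ by the Cauchy inequalities on a disk of radius $\gtrsim (1-b)\ep_{j+2}$ about $x$ contained in $\Om_{\ep_{j+2}}$; the resulting power $\ep_{j+2}^{-k}$ is absorbed by converting $h_{n^{(2)}}$ into $n^{(3)}_k\,k!$ through \eqref{eq:hmoderategrowth} of \Cref{lem:hmoderategrowth} with constant $D^{(2)}$. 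Summing the rapidly convergent series over $j$ and absorbing the fixed analytic contribution of $f_{\ep_0}$ then yields $|f^{(k)}(x)| \le C\si^k N^{(3)}_k$ with $\si$ of the stated form.

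Finally, \thetag{iii} combines these with the weight matrix axioms. In the Roumieu case, membership $f \in \cB^{\{\fM\}}$ gives $f \in \cB^{M^{(1)}}_{B_0}$ for some $M^{(1)} \in \fM$; R-regularity supplies $M^{(2)}$ realizing \eqref{eq:countcomp} and then $M^{(3)} \in \fM$ realizing \eqref{ass2}, so \thetag{i} produces approximations witnessing $(\cP_{\{\fM\}})$. Conversely, $(\cP_{\{\fM\}})$ furnishes some $N^{(1)} \in \fM$, and R-moderate growth yields $N^{(2)}, N^{(3)} \in \fM$ with finite $\on{mg}(N^{(i)},N^{(i+1)})$, whence \thetag{ii} gives $f \in \cE^{\{\fM\}}$ locally. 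The Beurling case is identical after swapping the roles of the quantifiers: B-regularity and B-moderate growth let one start from an arbitrary target sequence in $\fM$ and exploit that $\cB^{(\fM)}$-membership provides bounds for every $B_0$, matching the ``for all $c_2$'' clause of $(\cP_{(\fM)})$. The main obstacle I anticipate is the careful bookkeeping in \thetag{i}: controlling the Cauchy-transform solution of the $\db$-problem on the thin ellipses uniformly in $\ep$ while correctly tracking the transitions $M^{(1)} \rightsquigarrow M^{(2)} \rightsquigarrow M^{(3)}$ so that the advertised constant $c_2 = CB_0B_1$ emerges; the geometric distance estimate for $\Om_\ep$ in \thetag{ii} is routine by comparison.
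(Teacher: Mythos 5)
Your proposal is correct and follows essentially the same route as the paper: the almost analytic extension of \cite[Proposition 3.12]{FurdosNenningRainer} plus a Cauchy-transform solution of a $\db$-problem on the truncated ellipse for \thetag{i}, the telescoping differences $f_\ve - f_{2\ve}$ combined with \Cref{lem:324v}, Cauchy estimates, and \eqref{eq:hmoderategrowth} for \thetag{ii}, and the same quantifier bookkeeping for \thetag{iii} (including the key Beurling point that $c_2 = CB_0B_1$ can be made arbitrarily small since $\cB^{(\fM)}$-membership provides bounds for every $B_0$). The only cosmetic difference is in \thetag{i}: the paper feeds both \eqref{eq:countcomp} and \eqref{ass2} into the cited proposition, which already delivers $\db$-decay of order $h_{m^{(3)}}$ (the Cauchy transform loses no power of $\ve$, so no shift is needed there), whereas you invoke \eqref{ass2} after the area integration via the valid inequality $h_{m^{(2)}}(t)/t \le B_2 h_{m^{(3)}}(B_2 t)$; this re-bookkeeping is harmless.
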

	
	Note that [regularity] of $\fM$ is needed
	for the first implication in (iii),  [moderate growth] for the second.
	Item (iii) generalizes  \cite[Proposition 3.3.2]{Thilliez:2020ac}.

	\begin{proof}	
	We follow closely the proof of \cite[Proposition 3.3.2]{Thilliez:2020ac}.
	
	(i)
	Let $f \in \cB^{M^{(1)}}_{B_0}((-1,1))$. By \cite[Proposition 3.12]{FurdosNenningRainer}, there are constants $c_1,c_2>0$ and a function $F \in C^1_c(\C)$
	extending $f$ such that
	\begin{equation}
	\label{eq:dbext}
	|\db F (z)| \le c_1 h_{m^{(3)}}(c_2 d(z,[-1,1])), \quad z \in \C.
	\end{equation}
	Note that $c_1=c_1(\|f\|^{M^{(1)}}_{B_0}, B_0,B_1,B_2)$ and $c_2=12B_0B_1$.
	Then
	$w_\ve:=  \db F \,\mathbf{1}_{\Om_\ve}$ satisfies
	\[
		\|w_\ep\|_{\C} \le c_1 h_{m^{(3)}}(Cc_2 \ep),
	\]
	where $C>0$ is an absolute constant such that $d(z,[-1,1]) \le C\ep$ for $z \in \Om_\ep$.
	Moreover, the bounded continuous function
	\[
	v_\ve(z):= \frac{1}{2\pi i}\int_{\C} \frac{w_\ve (\ze)}{\ze -z} \, d\ze \wedge d \ol \ze
	\]
	satisfies $\db v_\ve = w_\ve$ in the distributional sense, and we have
	\begin{equation} \label{eq:estvep}
		\|v_\ve\|_{\C} \le c_1 h_{m^{(3)}}(C c_2 \ve).	
	\end{equation}
	So $f_\ve :=F- v_\ve$ is holomorphic on $\Om_\ve$ and continuous on $\ol \Om_\ep$.
	The estimates \eqref{eq:dbext} and \eqref{eq:estvep} easily imply \eqref{eq:holapprox}.

	(ii)
	Let $f : [-1,1] \to \C$ satisfy \eqref{eq:holapprox2}.
	Consider $g_\ep := f_\ve - f_{2\ve} \in \cH(\Om_{\varepsilon}) \cap C^0(\ol \Om_{\varepsilon})$.
	Then $\|g_\ep\|_{\Om_\ep} \le 2 K$  and
	$\|g_\ve\|_{[-1,1]} \le 2c_1 h_{n^{(1)}}(2c_2\ve)$.
	By \Cref{lem:324v},
	\[
		\|g_\ve\|_{\Om_{\varepsilon/2}} \le \max\{c_1,2K\} \, h_{n^{(2)}}(2eD^{(1)}c_2 \ve).
	\]
	There exists a (universal) constant $E>0$ such that for any $b<1$ the closed disk with 
	radius $E(1-b)\ve$ around any $x \in [-b,b]$ is contained in $\Om_{\varepsilon/2}$.
	The Cauchy estimates and \eqref{eq:hmoderategrowth} yield
	\begin{align*}
		\| g_\ve^{(j)}\|_{[-b,b]} &\leq \frac{\max\{c_1,2K\}}{(E(1-b)\ep )^{j}}  j!\,   h_{n^{(2)}}(2eD^{(1)}c_2\varepsilon)	
		\\
		&\leq \max\{c_1,2K\} \Big(\frac{2 eD^{(1)} D^{(2)} c_2}{E(1-b)}\Big)^j  N_j^{(3)}\,   
		h_{n^{(3)}}(2eD^{(1)} D^{(2)}c_2\varepsilon),
	\end{align*}
	which means
	$\| g_{\ep}\|^{N^{(3)}}_{\si,[-b,b]} \le \max\{c_1,2K\}\,  h_{n^{(3)}}(2eD^{(1)} D^{(2)}c_2\varepsilon)$
	for $\si = \frac{2eD^{(1)} D^{(2)}c_2}{E(1-b)}$. Thus, if $\ep_0>0$ is such that \eqref{eq:holapprox2} 
	holds for all $0<\ep\le \ep_0$, then 
	\[
	g := f_{\ve_0} + \sum_{j = 1}^\infty g_{\ve_0 2^{-j}} 
	= f_{\ve_0} + \sum_{j = 1}^\infty (f_{\ve_0 2^{-j}} - f_{\ve_02^{-j+1)}})
	\]
	converges absolutely in the Banach space $\cB^{N^{(3)}}_\si([-b,b])$. Clearly, for every $k \in \N$,
	\[
	g =  f_{\ve_02^{-k}} + \sum_{j = k+1}^\infty (f_{\ve_0 2^{-j}} - f_{\ve_02^{-j+1)}}),
	\]
	and $f = g$ on $[-b,b]$, since for $x \in [-b,b]$,
	\[
	|f(x)-g(x)| \le |f(x)-f_{\ve_02^{-k}}(x)| + \Big|\sum_{j = k+1}^\infty (f_{\ve_0 2^{-j}}(x) - f_{\ve_02^{-j+1)}}(x))\Big|
	\]
	which tends to $0$ as $k \to \infty$, by \eqref{eq:holapprox2} and absolute convergence of the sum.

	(iii)
	For the first implication in \eqref{eq:PMmatrix} in the Roumieu case, observe that for $f \in \cB^{\{\fM\}}((-1,1))$ we have 
	$f \in \cB^{M^{(1)}}_{B_0}((-1,1))$
	for some $B_0>0$ and $M^{(1)}\in \fM$.
	Then R-regularity of $\fM$ implies the existence of $M^{(2)}, M^{(3)} \in \fM$ 
	such that \eqref{eq:countcomp} and \eqref{ass2} are satisfied.
	Thus (i) yields the desired holomorphic approximation.

	In the Beurling case take any weight sequence $M^{(3)} \in \fM$. By B-regularity, we find $M^{(1)},~M^{(2)}$ 
	such that \eqref{eq:countcomp} and \eqref{ass2} are satisfied.
	If $f \in \cB^{(\fM)}((-1,1))$, then $f \in \cB^{M^{(1)}}_{B_0}((-1,1))$ for any $B_0>0$. 
	Again (i) yields the desired holomorphic approximation
	(since $c_2 = C B_0 B_1$).

	The second implication in \eqref{eq:PMmatrix} follows from (ii), since [moderate growth] of $\fM$ 
	yields weight sequences $N^{(i)}$ fulfilling the assumptions of (ii).
	\end{proof}
	
	\section{Proofs}
	\label{sec:proof}

	We are now ready to prove the main results.
	We begin with a technical lemma in which 
	we extract and slightly modify the essential arguments of \cite[Section 4]{Thilliez:2020ac}.
	Its general formulation allows us to readily complete the pending proofs.

	\subsection{A technical lemma}		

	\begin{lemma}
		\label{lem:corelemma}
		Let $j$ be a positive integer.
		Let $M^{(i)}$, $1 \le i \le \lceil\log_2(j(j+1))\rceil+7=:k$, 
		be weight sequences satisfying $(m^{(i)}_\ell)^{1/\ell} \rightarrow \infty$  
		and 
		\begin{gather*}
		\E B\ge 1 \A t>0 : \ol \Ga_{ m^{(2)}} (B t) \le \ul \Ga_{ m^{(1)}}(t),\\
		\on{mg}(M^{(i)},M^{(i+1)}) <\infty, \quad \text{ for }2 \le i \le k-1.
		\end{gather*}
		If $f:[-1,1] \rightarrow \C$ is such that
		$f^j,f^{j+1} \in \cB^{[M^{(1)}]}((-1,1))$,
		then $f \in \cE^{[M^{(k)}]}((-1,1))$.
	\end{lemma}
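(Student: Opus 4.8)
The plan is to follow the two-step strategy outlined in \Cref{sec:strategy}: first secure holomorphic approximations for $f^j$ and $f^{j+1}$ via \Cref{prop:332v}(i), then combine them to produce a holomorphic approximation of $f$ itself that is good enough to invoke \Cref{prop:332v}(ii) and conclude $f \in \cE^{[M^{(k)}]}$. The bookkeeping index $k = \lceil\log_2(j(j+1))\rceil+7$ is the tell: we will lose a bounded number of weight sequences to each application of a moderate-growth or counting-function comparison, and the $\log_2(j(j+1))$ term is precisely what is needed to control the $j(j+1)$-many factors that appear when passing from powers of $f$ back to $f$.

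First I would apply \Cref{prop:332v}(i) to $g := f^j$ and $h := f^{j+1}$, using the hypotheses on $M^{(1)}, M^{(2)}, M^{(3)}$, to obtain holomorphic approximants $g_\ep, h_\ep \in \cH(\Om_\ep)\cap C^0(\ol\Om_\ep)$ that are uniformly bounded on $\Om_\ep$ and satisfy $\|g - g_\ep\|_{[-1,1]}, \|h - h_\ep\|_{[-1,1]} \lesssim h_{m^{(3)}}(c_2\ep)$. The naive candidate for approximating $f$ is the quotient $h_\ep/g_\ep$, but this is defeated by small divisors wherever $g_\ep$ is near zero. Following step (ii) of the strategy, I would instead build the regularized candidate
\[
u_\ep \;=\; \vh_\ep\,\frac{\ol g_\ep\, h_\ep}{\max\{|g_\ep|, r_\ep\}^2},
\]
with a suitable cutoff $\vh_\ep$ and a threshold $r_\ep > 0$ chosen in terms of $\ep$ and the approximation rate. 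The point is that where $|g_\ep|$ is large, $u_\ep \approx h_\ep/g_\ep \approx f$, while where $|g_\ep|$ is small, the $\max$ in the denominator keeps $u_\ep$ bounded; one checks that $u_\ep$ is uniformly bounded on $\Om_\ep$ and that $\|f - u_\ep\|_{[-1,1]}$ decays at the required rate, after degrading the weight sequence through a controlled number of moderate-growth comparisons using \Cref{lem:hmoderategrowth}.

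The function $u_\ep$ is not holomorphic, so the final move is to correct it by solving a $\ol\p$-problem: set $v_\ep$ to be the standard Cauchy-transform solution of $\ol\p v_\ep = \ol\p u_\ep$, and put $f_\ep := u_\ep - v_\ep$. One estimates $\ol\p u_\ep$ (it is supported near the cutoff and near the zero set of $g_\ep$, and is controlled by $\ol\p g_\ep$, $\ol\p h_\ep$, and the derivatives of $\vh_\ep$), feeds the bound through the Cauchy transform to bound $v_\ep$ uniformly, and thereby obtains that $f_\ep \in \cH(\Om_\ep)\cap C^0(\ol\Om_\ep)$ is uniformly bounded with $\|f - f_\ep\|_{[-1,1]} \lesssim h_{n^{(1)}}(c_2'\ep)$ for an appropriate weight sequence in the chain. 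This is exactly the hypothesis \eqref{eq:holapprox2} of \Cref{prop:332v}(ii), so applying (ii) on each $(-b,b)$ and using that the class is local yields $f \in \cE^{[M^{(k)}]}((-1,1))$.

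The main obstacle I expect is the choice of the threshold $r_\ep$ and the accompanying estimate of $\ol\p u_\ep$: one must balance $r_\ep$ so that both the approximation error $\|f - u_\ep\|_{[-1,1]}$ and the $\ol\p$-defect remain controlled by the associated functions $h_{m^{(i)}}$, and the cancellation that makes $u_\ep$ close to $f$ depends delicately on the relation between the two powers $f^j$ and $f^{j+1}$ (heuristically, $f = h/g$ on the support of $f$). Tracking how each regularization step consumes one link in the chain $M^{(1)}, \dots, M^{(k)}$ — and verifying that the total consumption is bounded by the stated $k$ — is the bookkeeping heart of the argument; the Beurling case is handled in parallel by reading the quantifiers in $(\cP_{(\fM)})$ and the moderate-growth comparisons in the opposite direction, exactly as in \Cref{prop:332v}(iii).
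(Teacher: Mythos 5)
Your scaffolding matches the paper's proof (approximate $g=f^j$, $h=f^{j+1}$ via \Cref{prop:332v}(i), regularize the quotient as $u_\ep$, correct by a $\ol\p$-problem, conclude via \Cref{prop:332v}(ii)), but you have left unresolved exactly the step that the proof must actually supply, and you flag it yourself as ``the main obstacle'': how to choose $r_\ep$ and why $u_\ep$ is then both uniformly bounded and close to $f$. The paper's answer is the compatibility quantity $h_\ep^j - g_\ep^{j+1}$: since $g^{j+1}=h^j=f^{j(j+1)}$, the approximation bounds on $[-1,1]$ give $\|h_\ep^j - g_\ep^{j+1}\|_{[-1,1]} \le c_3\, h_{m^{(3)}}(c_2\ep)$, and --- crucially --- this smallness is propagated \emph{off the real interval} to the ellipse $\Om_{\ep/2}$ by the Hadamard three-lines argument (\Cref{lem:324v}), yielding $\de_\ep := c_4\, h_{m^{(4)}}(Cec_2\ep)$ and then the choice $r_\ep := \de_\ep^{1/(j+1)}$. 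All three key estimates --- the uniform bound $\|u_\ep\|_{\Om_{\ep/2}} \le (2K)^{1/j}$, the approximation $\|f-u_\ep\|_{[-1,1]} \le c_5\, r_\ep^{1/j}$, and the bound $\|v_\ep\|_{\Om_{\ep/2}} \le c_6\, \de_\ep^{1/s}$ for $s>j(j+1)$ on the $\ol\p$-correction (Thilliez's Lemmas 4.2.1--4.2.4) --- hinge on this two-dimensional estimate; knowing that $g_\ep, h_\ep$ approximate $f^j, f^{j+1}$ on $[-1,1]$ alone tells you nothing about $h_\ep/g_\ep$ away from the real axis, which is precisely where $u_\ep$ must be controlled. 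Without identifying $h_\ep^j-g_\ep^{j+1}$ and the three-lines step, the ``one checks'' in your second paragraph cannot be carried out.

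Two further concrete points. First, your claim that $\ol\p u_\ep$ is ``controlled by $\ol\p g_\ep$, $\ol\p h_\ep$'' is wrong: $g_\ep$ and $h_\ep$ are holomorphic on $\Om_\ep$, so those terms vanish identically; the defect comes from $\p\ph_\ep$ and, on the set $\{|g_\ep|\le r_\ep\}$, from the anti-holomorphic factor $\ol g_\ep$ in the numerator --- and bounding it there again requires knowing that $|h_\ep|$ is small where $|g_\ep|$ is small, i.e., the compatibility estimate once more. Second, the bookkeeping is more specific than ``a bounded number of links per comparison'': the Cauchy-transform estimate forces $s>j(j+1)$, the paper takes $s=2^{k-6}$, and converting the resulting error $\de_{2\ep}^{1/s}$ back into an associated function of a weight in the chain costs exactly $\ell=k-6$ applications of \eqref{eq:mgsquare}, namely $h_{m^{(4)}}(t)^{1/s}\le h_{m^{(k-2)}}((Ce)^{\ell}t)$; this is the precise origin of $k=\lceil\log_2(j(j+1))\rceil+7$, which your heuristic only gestures at.
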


	\begin{proof}
		Set $g:=f^j$ and $h := f^{j+1}$. 

		Let us begin with the Roumieu case.
		By \Cref{prop:332v}(i), 
		there exist families of holomorphic functions $(g_\ve)$, $(h_\ve)$ approximating $g$, $h$, respectively.
		More precisely, there exist positive constants $K,c_1,c_2$ and functions $g_\ep,h_\ep \in \cH(\Om_\ep) \cap C^0(\ol \Om_\ep)$ 
		such that, for all small $\ve>0$,
		\begin{gather}\label{eq:ghbound}
			\max\{\|g_\ve\|_{\Om_\ep},\|h_\ve\|_{\Om_\ep}\} \le K,
			\\
			\label{eq:jj+1approx}
			\max\{\|g- g_\ve\|_{[-1,1]}, \|h- h_\ve\|_{[-1,1]}\} \le c_1h_{m^{(3)}}(c_2\ve).
		\end{gather}
		Then $g_\ve^{j+1} - h_\ve^j \in \cH(\Om_\ep) \cap C^0(\ol \Om_\ep)$ satisfies
		\begin{align*}
			|g_\ve^{j+1} - h_\ve^j| &\le |g_\ve^{j+1} - f^{j(j+1)}| + |f^{j(j+1)} - h_\ve^j|
			\\
			&\le (j+1) \max\{|g_\ve|, |g|\}^j  |g_\ve - g| + j \max\{|h_\ve|, |h|\}^{j-1}  |h_\ve - h|
			\\
			&\le c_3 h_{m^{(3)}}(c_2 \varepsilon), \quad \text{ on } [-1,1].	
		\end{align*}
	 	Thus \Cref{lem:324v} implies
		\begin{equation}
		\label{eq:epshalf}
		\|h_\varepsilon^{j} - g_\varepsilon^{j+1}\|_{\Om_{\ep/2}} \le c_4 h_{m^{(4)}}(Ce c_2  \varepsilon) =: \de_\ep,
		\end{equation}
		where $C$ is chosen such that $C \ge \on{mg}(M^{(i)},M^{(i+1)})$ for all $2 \le i \le k-1$. 
		(Here and below all constants $c_i$ are independent of $\ep$.)

		Consider the continuous function
		\[
		u_\ve := \ph_{\ve}\frac{\ol g_\ve h_\ve}{\max\{|g_\ve|, r_\ve\}^2}, \quad \text{ with } r_\ve:= \de_\ve^{\frac{1}{j+1}},
		\]
		where $\ph_\ve$ is a smooth function compactly supported in $\Om_\ve$ and $1$ on $\Om_{\varepsilon/2}$.
		It coincides with $h_\ep/g_\ep$ in $\Om_{\ep/2} \cap \{|g_\ep|>r_\ep\}$, but is not holomorphic everywhere near $[-1,1]$.
		By taking $\ep>0$ sufficiently small, we may assume that $\de_\ep \le r_\ep \le 1$.

		Lemmas 4.2.1 to 4.2.4 in \cite{Thilliez:2020ac} (which apply without change to our situation)
		lead to a holomorphic approximation $(f_\ep)$ of $f$ by solving a suitable $\ol\p$-problem.
		Indeed, they show (using \eqref{eq:ghbound}, \eqref{eq:jj+1approx}, \eqref{eq:epshalf} and $h_{m^{(3)}}(t) \le h_{m^{(4)}}(eCt/2)$, by
		\eqref{eq:mgsquare} since $h_{m^{(4)}} \le 1$) that
		\begin{align} \label{eq:uepbound}
			\|u_\ep\|_{\Om_{\ep/2}} &\le (2K)^{1/j},
			\\
			\label{eq:fuepbound}
			\|f- u_\ep\|_{[-1,1]} &\le c_5 r_\ve^{1/j}, 
		\end{align}
		and that the bounded continuous function
		\[
		v_\ve(z) := \frac{1}{2\pi i}\int_{\Om_{\ve/2}} \frac{\ol\p u_\ve(\ze)}{\ze - z} \,d \ze \wedge d\ol \ze,
		\]
		which satisfies $\ol\p v_\ep = \ol\p u_\ep  \mathbf{1}_{\Om_{\ep/2}}$ in the distributional sense in $\C$,
		fulfills
		\begin{equation} \label{eq:vepbound}
						\|v_\ep\|_{\Om_{\ep/2}} \le c_6 \de_\ep^{1/s}
		\end{equation}
		where $s$ is any real number with $s > j(j+1)$ (with $c_6$ depending on $s$).

		Then $f_\ve := u_{2\ve}-v_{2\ve}$ is holomorphic in $\Om_\ep$ and continuous on $\C$.
		By \eqref{eq:uepbound} and \eqref{eq:vepbound},
		$\|f_\ep\|_{\Om_{\ep}}$ is uniformly bounded
		for all small $\ep$, and by  \eqref{eq:fuepbound} and \eqref{eq:vepbound},
		\[
		\|f-f_\ve\|_{[-1,1]} \le c_{7}\de_{2\ve}^{1/s}.
		\]
		Put $s:=2^{k-6}=: 2^\ell$.
		A repeated application of \eqref{eq:mgsquare} gives
		\[
		h_{m^{(4)}}(t)^{1/s}\le h_{m^{(k-2)}}((Ce)^\ell t), \quad t>0.
		\]
		Thus, for all small  $\ep$,
		\begin{align} \label{eq:final}
			\|f-f_\ve\|_{[-1,1]} \le c_{7}\de_{2\ve}^{1/s} &= c_7 \big(c_4 h_{m^{(4)}}(2e Cc_2  \varepsilon)\big)^{1/s} \notag
			\\& \le
		 c_7 c_4^{1/s} h_{m^{(k-2)}}(2 c_2(eC)^{\ell+1}\ve).
		\end{align}
		So \Cref{prop:332v}(ii) implies that $f \in \cE^{\{M^{(k)}\}}((-1,1))$. 
		This ends the proof in the Roumieu case.

		For the Beurling we observe that, by assumption, we find for any (small) $c_2>0$ 
	    approximating sequences $(g_\ve)$, $(h_\ve)$
	    such that \eqref{eq:ghbound} and \eqref{eq:jj+1approx} are satisfied. 
	    Then follow the above proof until the end and notice that thus also 
	    in the final approximation \eqref{eq:final}
	    the constant
	    $2c_2(eC)^{\ell+1}$ gets arbitrarily small as $c_2$ gets small. 
	    Again an application of \Cref{prop:332v} completes the proof.
	\end{proof}

\subsection{Proof of \texorpdfstring{\Cref{thm:matrix1dimJoris} --  {$\tensor[_1]{\cE}{^{[\fM]}}$}}{}}

We may assume that there is a positive integer $j$ such that $g=f^j$, $h=f^{j+1}$ 
are elements of the ring $\tensor[_1]{\cE}{^{[\fM]}}$.
By composing with suitable linear reparameterizations, we may further assume 
that they are represented by elements of $\cB^{[\fM]}((-1,1))$ 
which we denote by the same symbols.

In the Roumieu case, there exists $M^{(1)} \in \fM$ such that $g$, $h$ are contained in $\cB^{\{M^{(1)}\}}((-1,1))$
(by the linear order of $\fM$). By R-regularity and R-moderate growth of $\fM$,
we find sequences $M^{(i)} \in \fM$ satisfying the assumptions of \Cref{lem:corelemma}
which implies that $f \in \cE^{\{M^{(k)}\}}((-1,1))$.

In the Beurling case, we fix an arbitrary $M \in \fM$ and we show that $f \in \cE^{(M)}((-1,1))$.
By B-regularity and B-moderate growth of $\fM$, we now get sequences $M^{(i)} \in \fM$ 
as required in \Cref{lem:corelemma}, where $M^{(k)}=M$. 
By assumption, $g$, $h$ are elements of $\cB^{(M^{(1)})}((-1,1))$. 
Thus \Cref{lem:corelemma} gives $f \in \cE^{(M)}((-1,1))$. \qed

\subsection{Proof of \texorpdfstring{\Cref{thm:weightfunction} -- {$\tensor[_1]{\cE}{^{[\om]}}$}}{}}

This is an immediate corollary of \Cref{thm:matrix1dimJoris} and the discussion in \Cref{thm:BMTmatrix}.\qed

\subsection{Proof of \texorpdfstring{\Cref{thm:functionJoris} -- non-quasianalytic {$\tensor[_d]{\cE}{^{[\om]}}$}}{}}
\label{sec:multi}

We reduce the multidimensional result to the one-dimensional one.

In the Roumieu case $\tensor[_d]{\cE}{^{\{\om\}}}$, \Cref{thm:functionJoris} is a simple corollary of
\Cref{thm:matrixmultdimJoris};
the weight matrix $\fS$ from \Cref{thm:BMTmatrix} clearly satisfies \eqref{eq:matrixnonqa} (since $\om$ is non-quasianalytic). 

The Beurling case $\tensor[_d]{\cE}{^{(\om)}}$ can be reduced to the Roumieu case by means of the following lemma 
(which is an adaptation of \cite[Lemma 13]{Rainer:2020ab}).

\begin{lemma} \label{lem:omreduct}
	Let $\om$ be a non-quasianalytic concave weight function.
	Suppose that $f : [0,\infty) \to [0,\infty)$ is any function satisfying $\om(t) = o(f(t))$ as $t \to \infty$.
	Then there exists a non-quasianalytic concave weight function $\tilde \om$
	satisfying $\om(t) = o(\tilde \om(t))$ and $\tilde \om(t) = o(f(t))$ as $t \to \infty$.
\end{lemma}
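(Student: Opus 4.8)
The plan is to construct $\tilde\om$ from its derivative, mirroring the sequence construction in the proof of \Cref{lem:redDC} (which rests on \cite[Lemme 16]{ChaumatChollet94}): the whole issue is to push the growth of $\om$ past every multiple while staying below $f$, \emph{without} destroying concavity, $(\om_4)$, or non-quasianalyticity. First I would normalize, assuming $\om|_{[0,1]}\equiv 0$ and working with a $C^1$ representative whose derivative $\sigma:=\om'$ is nonincreasing with $\sigma(t)\to 0$. The relevant conditions then become transparent in terms of the slope: a Fubini computation rewrites non-quasianalyticity as $\int_1^\infty \sigma(t)/t\,dt<\infty$; concavity is exactly ``$\sigma$ nonincreasing''; and $(\om_4)$ is exactly ``$t\mapsto t\,\sigma(t)$ nondecreasing''. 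Two axioms come for free along the way: concavity together with $\om(0)=0$ yields $(\om_1)$ (indeed $\tilde\om(2t)\le 2\tilde\om(t)$), and $(\om_3)$ will be inherited as soon as $\tilde\om\ge\om$.

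Before the main step I would secure the sublinearity ceiling $(\om_2)$. Since $\om(t)/t\downarrow 0$ strictly, a preliminary slope-pushing (with only a \emph{lower} target, hence free of any competing upper bound and so genuinely easy) produces a fixed sublinear concave non-quasianalytic weight function $\theta$ with $\om=o(\theta)$. Replacing $f$ by $f_1:=\min\{f,\theta\}$, we still have $\om=o(f_1)$ and now $f_1$ is sublinear, so any $\tilde\om$ squeezed below $f_1$ will automatically satisfy $(\om_2)$. I then encode the remaining gap by the gauge $\beta(t):=\sup_{s\ge t}\om(s)/f_1(s)$, which is nonincreasing with $\beta(t)\to 0$ and $\om\le\beta f_1$. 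The target is a function $\Psi$ lying in the band $\om(t)\,\beta(t)^{-1/3}\le \Psi(t)\le \om(t)\,\beta(t)^{-2/3}$: the lower edge forces $\om=o(\Psi)$ because $\beta^{-1/3}\to\infty$, the upper edge forces $\Psi\le \beta^{1/3}f_1=o(f)$, and the multiplicative width $\beta^{-1/3}\to\infty$ of the band is the room in which to maneuver.

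Next I would realize the midpoint of this band as $\Psi(t)=\int_0^t\psi$ for a suitable nonincreasing slope $\psi$ with $t\,\psi(t)$ nondecreasing and $\int_1^\infty\psi(t)/t\,dt<\infty$. Concretely I would work on a rapidly increasing sequence of scales $t_n\to\infty$ on which $\beta$ is essentially constant, and prescribe the increments of $\Psi$ across each $[t_n,t_{n+1}]$; the freedom to choose the exponent of $\beta$ anywhere in $[1/3,2/3]$ (a window that itself widens) is precisely what lets me arrange, at the junctions, that the resulting slope is nonincreasing (concavity) while $t\,\psi(t)$ stays nondecreasing (that is, $(\om_4)$) — in exact analogy with the sequence $\delta$ of \Cref{lem:redDC}, which keeps $\mu_k/\delta_k$ increasing while $\delta_k\to\infty$. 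Setting $\tilde\om:=\Psi$, it then remains to read off $(\om_1)$ and $(\om_2)$ from concavity and sublinearity, $(\om_3)$ from $\tilde\om\ge\om$, $(\om_4)$ and concavity from the two monotonicities of $\psi$, non-quasianalyticity from $\int_1^\infty\psi/t<\infty$, and the two asserted $o$-relations from the band.

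I expect the main obstacle to be exactly the simultaneous reconciliation of the three competing demands in this last step: growth strictly above $\om$, growth strictly below $f$, and preservation of the structural monotonicities ($\psi$ nonincreasing for concavity and $t\,\psi$ nondecreasing for $(\om_4)$) together with the summability $\int_1^\infty\psi/t<\infty$ that \emph{is} non-quasianalyticity. These pull against one another — boosting the slope endangers both $\int\psi/t<\infty$ and the ceiling $\Psi=o(f)$, while the two monotonicities dictate how the boost may be spread across scales — and threading them is the heart of the matter. The Chaumat--Chollet interpolation used for sequences in \Cref{lem:redDC} is the mechanism that makes this possible; the present statement is its weight-function counterpart, adapting \cite[Lemma 13]{Rainer:2020ab}.
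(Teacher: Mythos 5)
Your slope reformulation is correct (for a $C^1$ representative: concavity $\Leftrightarrow$ $\om'$ nonincreasing, $(\om_4)$ $\Leftrightarrow$ $t\om'(t)$ nondecreasing, non-quasianalyticity $\Leftrightarrow$ $\int_1^\infty \om'(t)t^{-1}\,dt<\infty$ by Fubini), and so are the soft remarks on $(\om_1)$ and $(\om_3)$. But the core reduction has a genuine flaw: replacing the two $o$-relations by the pointwise band $\om\beta^{-1/3}\le\Psi\le\om\beta^{-2/3}$, with $\beta(t)=\sup_{s\ge t}\om(s)/f_1(s)$, is strictly stronger than what is needed, and the band can contain \emph{no} concave function at all. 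The reason is that $\beta$ is a supremum over the \emph{future}: a single point $T$ where $f_1$ dips close to $\om$, say $f_1(T)=\om(T)g(T)$ with $g(T)\to\infty$ slowly, forces the upper edge down to $\om(t)g(T)^{2/3}$ at $t=T$ (indeed on all of $[0,T]$), while shortly afterwards the lower edge recovers. Concavity with $\Psi(0)\ge0$ gives $\Psi(2T)\le 2\Psi(T)\le 2\om(T)g(T)^{2/3}$, so feasibility at $2T$ forces $\beta(2T)\ge\tfrac18 g(T)^{-2}$. Your preliminary ceiling $\theta$ (built from $\om$ alone) does not prevent this: take $f=\om e^t$ off a sparse sequence $(T_n)$ and $f=\om g_n$ near $T_n$, where $g_{n+1}=10g_n^2$ and $T_n$ is chosen so large that $g_n<(\theta/\om)(T_n)$ and $\sup_{s\ge 2T_n}(\om/\theta)(s)<\tfrac1{10}g_n^{-2}$. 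Then $\om=o(f)$, the dips survive in $f_1=\min\{f,\theta\}$, and $\beta(2T_n)\le\max\{\sup_{s\ge2T_n}\om/\theta,\ \sup_{m>n}g_m^{-1}\}\le\tfrac1{10}g_n^{-2}$, so the lower edge at $2T_n$ exceeds $2\om(T_n)g_n^{2/3}$, i.e.\ exceeds what concavity permits: the band is empty of concave functions, although the lemma is of course still true for this $f$. (Without the ceiling the failure is cruder: for $f=\om e^t$ the lower edge $\om e^{t/3}$ is superlinear.) On top of this, the step you yourself call ``the heart of the matter'' --- actually producing the slope $\psi$ --- is never carried out, and the preliminary construction of $\theta$ is not ``genuinely easy'': it is an instance of the lemma (with vacuous upper constraint), and the concavity-preserving growth boost it requires is exactly the hard point being deferred.

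The paper's proof avoids this trap by never tying the growth of $\tilde\om/\om$ pointwise to $f$. It chooses sparse scales $x_n\to\infty$ such that $f\ge n^2\om$ on $[x_n,\infty)$ and $\int_{x_n}^\infty \om(t)(1+t^2)^{-1}dt\le n^{-3}$, and builds $\tilde\om$ so that $(n-2)\,\om\le\tilde\om\le n\,\om$ on $[x_n,x_{n+1})$: concretely $\tilde\om$ equals $n\om$ minus a constant on $[y_n,x_{n+1})$ and is affine (a scaled tangent line of $\om$) on $[x_n,y_n)$, the point $y_n$ being fixed by the slope-matching condition $\om'(y_n)=\tfrac{n-1}{n}\om'(x_n)$, which makes $\tilde\om$ globally $C^1$ and concave and preserves $(\om_4)$ piece by piece. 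The sandwich then gives $\om=o(\tilde\om)$ and non-quasianalyticity, while $\tilde\om\le n\om\le f/n$ on $[x_n,x_{n+1})$ gives $\tilde\om=o(f)$. The structural difference from your plan: the rate at which $\tilde\om/\om\to\infty$ is chosen freely (as slowly as concavity demands, roughly like $n$ on the $n$-th block), and only the \emph{scales} $x_n$ are adapted to $f$; your band prescribes the rate itself as a function of $f$ through $\beta$, and that prescription is what concavity cannot in general accommodate.
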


\begin{proof}
	It suffices to extract some constructions from the proof of \cite[Lemma 13]{Rainer:2020ab} (to which we refer for details).
	We may assume that $\om$ is of class $C^1$. The condition $\om(t) = o(t)$ as $t \to \infty$
	implies that $\om'(t) \searrow 0$ as $t \to \infty$.

	Note that $\log(t) = o(\om(t))$ and $\om(t) = o(f(t))$ imply $f(t) \to \infty$ as $t \to \infty$.
   We define inductively three sequences $(x_n)$, $(y_n)$, and $(z_n)$ with $x_1=y_1=z_1 =0$, $x_2> 0$, and the following
   properties:
   \begin{gather}
         \label{px0}
         \int_{x_n}^\infty \frac{\om(t)}{1+t^2} dt \le  \frac{1}{n^3},
         \\
         \label{px1}
         x_n > 2  y_{n-1} + n,
         \\
         \label{px4}
         f(t) \ge n^2 \om(t), \quad \text{ for all } t\ge x_n,
         \\
         \label{px3}
         \om(x_n) \ge 2^{n-i} \om(z_i), \quad 1 \le i \le n-1,
         \\
         \label{py1}
         \om'(y_n) = \frac{n-1}{n} \om'(x_n),
         \\
         \label{pz1}
         \om(z_n) = n \om(y_n) - (n-1) \big(\om(x_n) + (y_n-x_n) \om'(x_n)\big).
   \end{gather}
   Concavity of $\om$ guarantees well-definedness of these conditions.
Then
   \[
     \tilde \omega(t) :=
      \begin{cases}
        (n-1)  \big(\om(x_n) + (t-x_n) \om'(x_n) \big) - \sum_{i=1}^{n-2} \om(z_{i+1}) & \text{ if } x_n \le t < y_n,
        \\
        n \om(t) - \sum_{i=1}^{n-1} \om(z_{i+1}) & \text{ if } y_n \le t < x_{n+1},
      \end{cases}
   \]
   defines a non-quasianalytic concave weight function of class $C^1$ satisfying 
   \begin{equation} \label{eq:omtildeom}
   	(n-2) \om(t) \le \tilde \om(t) \le n \om(t), \quad \text{ if } t \in [x_n,x_{n+1}) \text{ and } n \ge 2.
   \end{equation}
   (Non-quasianalyticity follows from \eqref{px0} and the second inequality in \eqref{eq:omtildeom}; cf.\ \cite[Remark 14]{Rainer:2020ab}.)
   Together with \eqref{px4} this implies that
    $\om(t) = o(\tilde \om(t))$ and $\tilde \om(t) = o(f(t))$ as $t \to \infty$.
\end{proof}

Suppose that $g=f^j$, $h=f^{j+1}$ are representatives (of the corresponding germs) 
belonging to $\cB^{(\om)}(U)$ on some relatively compact $0$-neighborhood $U$ in $\R^d$
and consider the sequence $L_k$ defined in \eqref{eq:defL}.
Then for each integer $j \ge 1$ there exists $C_j>1$ such that
\[
	L_k \le C_j \exp(j \vh^*_\om(k/j)), \quad \text{ for all } k \in \N. 	
\]
Defining the function $\ell : [0,\infty) \to \R$ by
\[
	\ell(t) := \log \max\{L_k,1\}, \quad \text{ for } k \le t <k+1,
\]
and performing the subsequent steps in \cite[Section 5]{Rainer:2020ab}, we find
that 
$\ell\le \vh^*_{\tilde \om} + \on{const}$, where $\tilde \om$ is the weight function provided by \Cref{lem:omreduct}.
This means that $g$, $h$ belong to $\cB^{\{\tilde \om\}}(U)$.
Invoking \Cref{thm:functionJoris} in the Roumieu case
shows that $f \in \tensor[_d]{\cE}{^{\{\tilde \om\}}}$.
Since $\om(t) = o(\tilde \om(t))$ as $t \to \infty$
we may conclude that $f \in \tensor[_d]{\cE}{^{(\om)}}$. \qed

\subsection{Proof of \texorpdfstring{\Cref{thm:quasiJoris} -- quasianalytic {$\tensor[_d]{\cE}{^{\{M\}}}$}}{}}
The following lemma is a variant of \cite[Theorem 1.6(3)]{KMRq}.

\begin{lemma} \label{lem:intersectable}
		Let $M$ be a quasianalytic intersectable weight sequence.
		Then:
		\begin{enumerate}[label=\textnormal{(\roman*)}]
			\item $n_k^{1/k} \to \infty$ for all $N \in \cL(M)$.
			\item If $M$ has moderate growth, then for every $N \in \cL(M)$ 
			there exists $N' \in \cL(M)$ such that $\on{mg}(N',N)<\infty$.
		\end{enumerate}
	\end{lemma}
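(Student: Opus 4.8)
First I would dispose of (i), which needs nothing beyond the definition of $\cL(M)$: every $N\in\cL(M)$ is by hypothesis a \emph{non-quasianalytic} weight sequence, and non-quasianalyticity (condition \eqref{eq:Mnq}) forces $n_k^{1/k}\to\infty$, as recalled there. Intersectability of $M$ is not needed for this part.

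For (ii) I would fix $N\in\cL(M)$, set $C_0:=\on{mg}(M,M)<\infty$ (finite by moderate growth of $M$), and begin from the observation that $M$ \emph{almost} serves as the required $N'$. Indeed, from $M_{j+k}\le C_0^{j+k}M_jM_k$ together with $M\le N$ one gets $M_{j+k}\le C_0^{j+k}N_jN_k$ for all $j,k$, that is $\on{mg}(M,N)\le C_0<\infty$. The sole obstruction to $M\in\cL(M)$ is thus its quasianalyticity, and the whole point is to perturb $M$ \emph{upward} into a non-quasianalytic weight sequence that keeps this mixed bound relative to $N$; the non-quasianalytic $N\ge M$ supplies the room for the perturbation.

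Concretely, I would take for $N'$ a log-convex regularization of the balanced inf-convolution envelope
\[
  E_l:=\min_{j+k=l} N_jN_k, \qquad N'_l\asymp C_0^{\,l}\,E_l .
\]
Since $N$ is a weight sequence, $\log N$ is convex, so $j\mapsto\log N_j+\log N_{l-j}$ is convex and symmetric and the minimum in $E_l$ is attained at the balanced split; hence $E_l\asymp N_{\lfloor l/2\rfloor}N_{\lceil l/2\rceil}$ and $\nu'_l\asymp\nu_{\lceil l/2\rceil}$. The three defining properties of $\cL(M)$, plus the mixed bound, then follow: $\on{mg}(N',N)<\infty$ because $N'_l\le\mathrm{const}^{\,l}N_jN_k$ for \emph{every} split $j+k=l$; the inequality $M_l\le C_0^l E_l$ found above yields $N'\ge M$ once the geometric prefactor is chosen $\ge C_0$; and $N'$ is non-quasianalytic because $\sum_l 1/\nu'_l\lesssim\sum_p 1/\nu_p<\infty$, the last sum converging since $N$ is non-quasianalytic. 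Finally $(N'_l)^{1/l}\ge (M_l)^{1/l}\to\infty$ makes $N'$ a weight sequence, and the regularization is arranged so that $n'$ is log-convex; thus $N'\in\cL(M)$.

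The main obstacle is precisely the tension in the non-quasianalyticity step: to be non-quasianalytic $N'$ must outgrow the quasianalytic $M$, while the mixed cap forbids it from outgrowing $N$. The balanced inf-convolution threads this needle, and the one genuinely delicate point is to verify that passing to a log-convex regularization (needed to make $n'$ log-convex and $\nu'$ increasing) alters $E_l$ only by geometric factors, so that non-quasianalyticity and the mixed bound are preserved at once. This is the step where I would follow the construction of \cite[Theorem 1.6(3)]{KMRq}, in the spirit of the Chaumat--Chollet interpolation already used in the proof of \Cref{lem:redDC}.
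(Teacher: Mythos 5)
Your part (i) is correct and even a bit more direct than the paper's argument: you use the standard fact, recalled in \Cref{sec:weightsequences}, that non-quasianalyticity \eqref{eq:Mnq} of the weight sequence $N$ already forces $n_k^{1/k}\to\infty$, whereas the paper instead combines $N\ge M$ with $m_k^{1/k}\to\infty$ (which holds by intersectability, cf.\ \Cref{rem:intersectable}). In part (ii) your strategy is the same as the paper's, namely the balanced inf-convolution construction in the spirit of \cite[Theorem 1.6(3)]{KMRq} with a geometric prefactor, and your verifications of $\on{mg}(N',N)<\infty$, of $N'\ge M$, and of non-quasianalyticity are all valid for the unregularized envelope $K^l E_l$.

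The gap sits exactly at the point you flag as delicate, and it is genuine. Membership in $\cL(M)$ requires that $n'_l=N'_l/l!$ be log-convex, and your capital-letter envelope never has this property: with $N'_l=K^l N_{\lfloor l/2\rfloor}N_{\lceil l/2\rceil}$ one computes
\[
\frac{n'_l}{n'_{l-1}}=\frac{K\,\nu_{\lceil l/2\rceil}}{l},\qquad \nu_p:=\frac{N_p}{N_{p-1}},
\]
and in the step from $l=2j-1$ to $l=2j$ the numerator $\nu_{\lceil l/2\rceil}=\nu_j$ is unchanged while $l$ increases, so this quotient strictly decreases at every even index, whatever the constant $K$. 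Hence a regularization is unavoidable, and it is not a harmless matter of geometric factors: the largest log-convex minorant of $n'$ trivially preserves the \emph{upper} bound $\on{mg}(\cdot\,,N)<\infty$, but it threatens precisely the two \emph{lower} bounds $N'\ge M$ and non-quasianalyticity; and since $m$ itself is not assumed log-convex (only $M$ is a weight sequence; moderate growth does not help), you cannot argue that the minorant still dominates $m$ without exhibiting an explicit log-convex sequence squeezed between $m$ and $n'$ --- which is exactly the missing construction. The paper removes the problem at the source by performing the same inf-convolution on the \emph{small} sequence: $n'_k:=C^k\min_{0\le j\le k}n_jn_{k-j}$ with $C:=\on{mg}(m,m)$ (finite because moderate growth of $M$ passes to $m$). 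Since $n$ is log-convex by the definition of $\cL(M)$, the minimum is attained at the balanced split and the quotients of $n'$ are $C\ul\nu_1,C\ul\nu_1,C\ul\nu_2,C\ul\nu_2,\dots$ with $\ul\nu_j:=n_j/n_{j-1}$, hence increasing; so $n'$ is log-convex by inspection, no regularization is needed, and the remaining checks run just as you outline them (the paper verifies non-quasianalyticity via $(N'_{2j})^{1/(2j)}\gtrsim N_j^{1/j}$, but a ratio argument like yours works equally well). So the single missing idea is to build the envelope from $n_j=N_j/j!$ rather than from $N_j$.
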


	\begin{proof}
		(i) is obvious, since $m_k^{1/k} \to \infty$ (cf.\ \Cref{rem:intersectable}). 

		(ii) If $M$ has moderate growth, then so has $m$.
		Set $C:= \on{mg}(m,m) <\infty$. For $N \in \cL(M)$ we define $N'$ by
		$n'_k := C^k \min_{0 \le j \le k} n_j n_{k-j}$
		or equivalently
		\[
			n'_{2j} := C^{2j}  \ul\nu_1^2 \ul\nu_2^2 \ul\nu_3^2 \cdots \ul\nu_j^2, \quad
			n'_{2j+1} := C^{2j+1}  \ul\nu_1^2 \ul\nu_2^2 \ul\nu_3^2 \cdots \ul\nu_j^2 \ul\nu_{j+1},
		\]
		where $\ul\nu_k := n_{k}/n_{k-1}$. Then clearly $\on{mg}(N',N)<\infty$.
		Since $\ul\nu_k$ is increasing, so is $\ul\nu'_k := n'_k/n'_{k-1}$, thus $n'$ is log-convex.
		Moreover,
		\[
			n'_{2j} = C^{2j}  n_j^2  \ge \frac{m_{2j}}{m_j^2} n_j^2 \ge m_{2j}
		\]
		and analogously $n'_{2j+1} = C^{2j+1} n_j n_{j+1} \ge m_{2j+1}$, so that  $N'\ge M$.
		It remains to check that $N'$ is non-quasianalytic.
		Since $N'$ is log-convex, the sequence $(N'_k)^{1/k}$ is increasing and 
		so it suffices to show that $\sum_j (N'_{2j})^{-1/(2j)}<\infty$.
		This is clear, since
		\[
			(N'_{2j})^{1/(2j)} = ((2j)!\, C^{2j}  n_j^2)^{1/(2j)} \ge \frac{2C}{e} j n_j^{1/j} \ge \frac{2C}{e} N_j^{1/j}
		\]
		and $N$ is non-quasianalytic.
	\end{proof}

Let $M$ be a quasianalytic intersectable weight sequence of moderate growth. 
Suppose that $g=f^j$, $h=f^{j+1}$ are elements of $\tensor[_d]{\cE}{^{\{M\}}}$.
Since $M$ is intersectable, it suffices to show that $f \in \tensor[_d]{\cE}{^{\{N\}}}$ for every $N \in \cL(M)$.
Fix such $N$.
By \Cref{lem:intersectable}, there exist
$N^{(1)}, \dots, N^{(k)} \in \cL(M)$ with $N^{(k)}=N$ such that the requirements of \Cref{lem:corelemma} are satisfied.
(Note that $\cL(M)$ is not a weight matrix in the sense of \Cref{def:matrix}, because it is not totally ordered.)

Let $U$ be an open $0$-neighborhood in $\R^d$ on which we have  
$g, h \in \cE^{\{M\}}(U)$ for representatives which are denoted by the same symbols.
Take any curve $c \in \cE^{\{N^{(1)}\}}(\R, U)$ with compact support. 
Then, by composition closedness of $\cE^{\{N^{(1)}\}}$ as $n^{(1)}$ is log-convex,
we have $g\circ c, h\circ c \in \cE^{\{N^{(1)}\}}(\R)$.
After a linear change of variables, we may assume that $g\circ c, h\circ c \in \cB^{\{N^{(1)}\}}((-1,1))$.
Thus \Cref{lem:corelemma} yields that $f\circ c \in \cE^{\{N\}}((-1,1))$.
This implies that $f \in \cE^{\{N\}}(U)$, by \cite[Theorem 2.7]{KMRq}.\qed

\subsection*{Acknowledgements} 
We wish to thank Prof.\ Jos\'{e} Bonet Solves 
for drawing our attention to \cite{Thilliez:2020ac}.


\def\cprime{$'$}
\providecommand{\bysame}{\leavevmode\hbox to3em{\hrulefill}\thinspace}
\providecommand{\MR}{\relax\ifhmode\unskip\space\fi MR }
\providecommand{\MRhref}[2]{%
  \href{http://www.ams.org/mathscinet-getitem?mr=#1}{#2}
}
\providecommand{\href}[2]{#2}

\end{document}